\newtheorem{theorem}{Theorem}[section]    
\newtheorem{lemma}[theorem]{Lemma}          
\newtheorem{proposition}[theorem]{Proposition}  
\newtheorem{corollary}[theorem]{Corollary} 
\theoremstyle{definition}
\newtheorem{remark}[theorem]{Remark}
\newtheorem{example}[theorem]{Example}
\def\co{\colon \thinspace}
\newcommand{\GL}{\textrm{GL}}
\newcommand{\Z}{\mathbb{Z}}
\newcommand{\R}{\mathbb{R}}
\newcommand{\C}{\mathbb{C}}
\newcommand{\bL}{\mathbb{L}}
\newcommand{\sltwo}{\mathfrak{sl}_{2}}
\newcommand{\mQ}{\mathcal{Q}}
\newcommand{\tC}{\widetilde{C}}
\newcommand{\bF}{\mathbb{F}}
\newcommand{\be}{\mathbf{e}}
\newcommand{\half}{\frac{1}{2}}
\DeclareMathOperator{\tr}{tr}
\DeclareMathOperator{\Sym}{Sym}
\def\diaCrossP{\unitlength.1em
  \begin{minipage}{10\unitlength}
    \begin{picture}(10,10)
      \put(0,0){\vector(1,1){10}}
      \qbezier(10,0)(10,0)(7,3)
      \qbezier(3,7)(0,10)(0,10)
      \put(0,10){\vector(-1,1){0}}
    \end{picture}
  \end{minipage}
}
\def\diaCrossN{\unitlength.1em
  \begin{minipage}{10\unitlength}
    \begin{picture}(10,10)
      \put(10,0){\vector(-1,1){10}}
      \qbezier(0,0)(0,0)(3,3)
      \qbezier(7,7)(10,10)(10,10)
      \put(10,10){\vector(1,1){0}}
    \end{picture}
  \end{minipage}
}
\def\diasolve{\unitlength.1em
  \begin{minipage}{10\unitlength}
    \begin{picture}(10,10)
      \put(0,10){\vector(-1,1){0}}
      \qbezier(0,0)(5,5)(0,10)
      \qbezier(10,0)(5,5)(10,10)
      \put(10,10){\vector(1,1){0}}
    \end{picture}
  \end{minipage}
}
\title[Topological formula of the colored Jones polynomials]{Topological formula of the loop expansion of the colored Jones polynomials}
\author{Tetsuya Ito}
\address{Research Institute for Mathematical Sciences, Kyoto university, Kyoto, 606-8502, Japan}
\email{tetitoh@kurims.kyoto-u.ac.jp}
\urladdr{http://www.kurims.kyoto-u.ac.jp/~tetitoh/}
\subjclass[2010]{Primary~57M27 
, Secondary~37B40,20F36,81R50}
\keywords{Colored Jones polynomial, Loop expansion, homological representation of the braid groups, entropy}
\begin{document}

\begin{abstract} 
We give a topological formula of the loop expansion of the colored Jones polynomials by using identification of generic quantum $\sltwo$ representation with homological representations. This gives a direct topological proof of the Melvin-Morton-Rozansky conjecture, and a connection between entropy of braids and quantum representations.
\end{abstract}
\maketitle

\section{Introduction}

For $\alpha \in \{2,3,4,\ldots\}$ and an oriented knot $K$ in $S^{3}$, let $J_{K,\alpha}(q) \in \Z[q,q^{-1}]$ be the $\alpha$-colored Jones polynomial of $K$ normalized so that $J_{\textsf{Unknot}, \alpha}(q) = 1$. As Melvin-Morton proved \cite{mm}, by putting $q=e^{\hbar}$, the colored Jones polynomials can be expanded as a power series of two independent variables $\hbar\alpha$ and $\hbar$, as
\[
 J_{K,\alpha}(e^{\hbar}) = \sum_{i=0}^{\infty} D^{(i)}(\hbar \alpha) \hbar^{i}  = \sum_{i=0}^{\infty}\left(\sum_{k=0}^{\infty} d^{(i)}_{k}(\hbar\alpha)^{k}\right) \hbar^{i}. \]
Further, we put $z = e^{  \hbar \alpha}$ and write the colored Jones polynomials as a function on $\hbar$ and $z$,
\[ CJ_{K}(z,\hbar) = J_{K,\alpha}(e^{\hbar})= \sum_{i=0}^{\infty} V_{K}^{(i)}(z) \hbar^{i}. \]
We call $CJ_{K}$ the \emph{colored Jones function} or, the \emph{loop expansion of quantum $\sltwo$-invariant} since it coincides with the $\sltwo$ weight system reduction of the loop expansion of the Kontsevich invariant. In particular, the $i$-th coefficient $V^{(i)}(z)$ corresponds to the $(i+1)$-st loop part of the loop expansion of the Kontsevich invariant \cite{oht1}.

Let $\nabla_K(z)$ be the Conway polynomial of $K$, characterized by the skein relation 
\[ \nabla_{\diaCrossP}(z) - \nabla_{\diaCrossN}(z) = z \nabla_{\diasolve}(z),\ \ \ \nabla_{\sf Unknot}(z) = 1 \]
and let $\Delta_{K}(z) = \nabla_{K}(z^{\frac{1}{2}}-z^{-\frac{1}{2}})$ be the Alexander-Conway polynomial. The Alexander-Conway polynomial appears as one of the basic building block of $CJ_{K}$. The Melvin-Morton-Ronzansky conjecture \cite{mm} (MMR conjecture, in short), proven in \cite{bg}, states that the $V^{(0)}(z)$ is equal to $\Delta_{K}(z)^{-1}$. More generally, $V^{(i)}(z)$ is a rational function whose denominator is $\Delta_{K}(z)^{2i+1}$ \cite{ro1}.

In a theory of quantum invariants, the appearance of the Alexander-Conway polynomial is well-understood. The aforementioned rationality of $V^{(i)}(z)$ follows from Ronzansky's rationality conjecture \cite{ro2} of the loop expansion of Kontsevich invariant, proven in \cite{kr}: The Aarhus integral computation of Kontsevich (or, LMO) invariant, based on a surgery presentation of knots, provides the desired rationality (see \cite[Section 1.2]{gk} for a brief summary of Kricker's argument).

The clasper surgery \cite{ha} explains a geometric connection between the loop expansion and infinite cyclic covering \cite{gr}. A null-clasper, a clasper with null-homologous leaves in the knot complement, lifts to a clasper in the infinite cyclic covering, and the loop expansion nicely behaves under the clasper surgery along null-claspers. Thus schematically speaking, the loop expansion provides a $\Z$-equivariant quantum invariants \cite{gr} (for example, the 2-loop part can be interpreted as the $\Z$-equivariant Casson invariant, as discussed in \cite{oht2}), so it is not surprising that the Alexander-Conway polynomial appears in the loop expansion.

Nevertheless, it is still mysterious why the Alexander polynomial appears in such a particular and direct form. Even for the MMR conjecture, the simplest and the most fundamental rationality result, the situation is not so good as we want. In a known proof, one uses quantum-invariant-like treatment of the Alexander-Conway polynomial such as, state-sum, R-matrix, or weight systems so its topological content is often indirect.

In this paper, we give a topological formula of $CJ_{K}$ by using homological braid group representations (Theorem \ref{theorem:main}). Our starting point is a recent result in \cite{i,koh2} that identifies certain homological representations introduced by Lawrence \cite{law} with generic $U_{q}(\sltwo)$ representations. This allows us to translate a construction of the colored Jones function in terms of corresponding homological representations. Also, in Section \ref{sec:ent} we discuss a connection among the entropy of braids, quantum representations, and the growth of quantum $\sltwo$ invariants inspired from topological point of view.

One may notice that our approach is similar to Lawrence-Bigelow's approach of Jones polynomial \cite{big,law2}, but there are several critical differences:
We give a formula of the loop expansion $CJ_{K}$ but do not provide a formula of each individual colored Jones polynomials. Our formula uses closed braid representatives, whereas Lawrence-Bigelow description uses plat representatives and intersection products.

It should be emphasized that quantum representations coming from finite dimensional $U_{q}(\sltwo)$-module is \emph{not} identified with homological representation. This is the reason why we do not have a direct topological formula of usual colored Jones polynomials.

Our topological description leads to several insights.
First, the MMR conjecture is now obtained as a direct consequence of our topological formula. By putting $\hbar =0$, topological considerations show that the homological representations is equal to the symmetric powers of the reduced Burau representation, so they naturally lead to the Alexander-Conway polynomial.
Second, our formula gives a new and direct way to calculate $CJ_{K}(z,\hbar)$ without knowing or computing individual colored Jones polynomial $J_{K,\alpha}(q)$ or appealing surgery presentation of knots, although a general calculation is still difficult.

\section*{Acknowledgments}
The author was partially supported by JSPS Grant-in-Aid for Research Activity start-up, Grant Number 25887030.
He would like to thank Tomotada Ohtsuki, Jun Murakami and Hitoshi Murakami for stimulating discussion and comments.

\section{A topological description of generic quantum $\sltwo$ representation}

In this section we review the result in \cite{i} that identifies a generic quantum $\sltwo$ representation given in \cite{jk} with Lawrence's homological representation and some additional arguments to treat non-generic case. 

Throughout the paper, we use the following notations and conventions.
The $q$-numbers, $q$-factorials, and $q$-binomial coefficients are defined by
\[ [n]_{q} = \frac{q^{\frac{n}{2}}-q^{-\frac{n}{2}}}{q^{\half}-q^{-\half}}, \ \ [n]_{q} ! = [n]_{q}[n-1]_{q} \cdots [2]_{q}[1]_{q} , \ \  \left[ \!\!\begin{array}{c} n \\ j \end{array} \!\! \right]_{q} = \frac{[n]_{q}!}{[n-j]_{q}![j]_{q}!}, \]
respectively. This convention is different from one in \cite{i,jk}. The quantum parameter $q$ in this paper corresponds to $q^{2}$ in \cite{i,jk}.
We always assume that the braid group $B_{n}$ is acting from left.

Let $R$ be a commutative ring.
For $R$-modules (resp. $R B_{n}$-modules) $V$ and $W$, we denote $V \cong_{\mQ} W$ if they are isomorphic over the quotient field $\mQ$ of $R$. Namely, $C \cong_{\mQ} W$ implies $V \otimes_{R} \mQ$ and $W \otimes_{R} \mQ$ are isomorphic as $\mQ$-modules (resp. $\mQ B_{n}$-module).

For a subring $R \subset \C$, let $R[x^{\pm  1}]$ be the Laurent polynomial ring, and for an $R[x^{\pm 1}]$-module $V$ and $c \in \C$, we denote the specialization of the variable $x$ to complex parameter $c$ by $V|_{x=c}$.

\subsection{Generic quantum representation}

Let $\C[[\hbar]]$ be the algebra of the complex formal power series in one variable $\hbar$, and we put $q=e^{\hbar}$, as usual. 
A quantum enveloping algebra $U_{\hbar}(\sltwo)$ is a topological Hopf algebra over $\C[[\hbar]]$ generated by $H,E,F$ subjected to the relations
\begin{gather*}
\begin{cases}
[H,E] = 2E, \;\;\;[H,F] = -2F , \\
{}\displaystyle [E,F] = \frac{\sinh(\frac{\hbar H}{2})}{\sinh(\frac{\hbar}{2})} = \frac{e^{\frac{\hbar H}{2}} - e^{- \frac{\hbar H}{2}}}{e^{\frac{\hbar}{2}}- e^{-\frac{\hbar}{2}}}
\end{cases}
\end{gather*}

$U_{\hbar}(\sltwo)$ is a quasi-triangular topological Hopf algebra and a \emph{universal $R$-matrix} $\mathcal{R} \in U_{\hbar}(\sltwo) \otimes U_{\hbar}(\sltwo)$  
is given by
\begin{equation}
\label{eqn:univR}
\mathcal{R} = e^{\frac{\hbar}{4} (H \otimes H)} \left( \sum_{n=0}^{\infty} q^{ \frac{n(n-1)}{4}} \frac{(q^{\frac{1}{2}}-q^{-\frac{1}{2}})^{n}} {[n]_{q} !} E^{n} \otimes F^{n} \right).
\end{equation}
(Strictly speaking, here we need to use the topological tensor product $\widetilde{\otimes}$, the $\hbar$-adic completion of $U_{\hbar}(\sltwo) \otimes U_{\hbar}(\sltwo)$. To make notation simple, in the rest of the paper $\otimes$ should be regarded as the topological tensor product, if we should do so.) 

For $\lambda \in \C^{*}$, let $V_{\lambda}$ be the Verma module of highest weight $\lambda$, a topologically free $U_{\hbar}(\sltwo)$-module generated by a highest weight vector $v_{0}$ with $H v_{0} = \lambda v_{0}$ and $E v_{0}=0$.

Now let us regard $\lambda$ as an abstract variable.
Let $\widehat{V_{\hbar,\lambda}}$ be a $\C[\lambda][[\hbar]]$-module freely generated by $\{\widehat{v_{0}},\widehat{v_{1}},\ldots,\}$, equipped with an $U_{\hbar}(\sltwo)$-module structure 
\begin{gather}
\label{eqn:verma}
\begin{cases}
 H \widehat{v}_{i} = (\lambda -2i) \widehat{v_{i}} \\
 E \widehat{v}_{i} = \widehat{v}_{i-1} \\
 F \widehat{v}_{i} = [i+1]_{q}[\lambda-i]_{q} \widehat{v}_{i+1}.
\end{cases}
\end{gather}
Here we put
\[ [\lambda-i]_{q} = \frac{\sinh(\half \hbar(\lambda-i))}{\sinh(\half \hbar)} = \frac{e^{\half \hbar(\lambda-i)} - e^{- \half \hbar(\lambda-i)}}{e^{\half \hbar}-e^{-\half \hbar}}. \]
We call $\widehat{V_{\hbar,\lambda}}$ a \emph{generic Verma module}. 

For $j=0,1,\ldots,$ define
\[ v_{j} = [\lambda]_{q}[\lambda-1]_{q}\cdots[\lambda-j+1]_{q} \widehat{v_{j}} \]
and let $V_{\hbar,\lambda}$ be the sub $U_{\hbar}(\sltwo)$-module of $\widehat{V_{\hbar,\lambda}}$ spanned by $\{v_{0},v_{1},\ldots \}$, with the action of $U_{\hbar}(\sltwo)$ given by
\begin{gather}
\label{eqn:vermamod}
\begin{cases}
 H v_{i} = (\lambda -2i) v_{i} \\
 E v_{i} = [\lambda+1-i]_{q} v_{i-1} \\
 F v_{i} = [i+1]_{q} v_{i+1}.
\end{cases}
\end{gather}

For $c \not \in \C^{*}-\{1,2,\ldots\}$, $\widehat{V_{\hbar,\lambda}}|_{\lambda = c}$ is isomorphic to $V_{\hbar,\lambda}|_{\lambda = c}$ because $[\lambda]_{q}[\lambda-1]_{q}\cdots[\lambda-j+1]_{q}$ is invertible for all $j$.
On the other hand, for $c \in\{1,2,\ldots,\}$, $v_{j}=0$ if $j>c$ and (\ref{eqn:vermamod}) shows that $V_{\hbar,\lambda}|_{\lambda = c}$ is nothing but the standard irreducible $U_{\hbar}(\sltwo)$-module of dimension $(c+1)$ whereas $\widehat{V_{\hbar,\lambda}}|_{\lambda = c}$ is infinite dimensional representation.

Let us define $\mathsf{R}: \widehat{V_{\hbar,\lambda}} \otimes \widehat{V_{\hbar,\lambda}} \rightarrow \widehat{V_{\hbar,\lambda}} \otimes \widehat{V_{\hbar,\lambda}}$ by $\mathsf{R} = e^{-\frac{1}{4}\hbar \lambda^{2}} T \mathcal{R}$, where $T:\widehat{V_{\hbar,\lambda}} \otimes \widehat{V_{\hbar,\lambda}} \rightarrow \widehat{V_{\hbar,\lambda}} \otimes  \widehat{V_{\hbar,\lambda}}$ is the transposition map $T(v\otimes w) = w \otimes v$, and $\mathcal{R}$ is the universal $R$-matrix (\ref{eqn:univR}).

By putting $z=q^{\lambda -1}=e^{\hbar(\lambda-1)}$, the action of $\mathsf{R}$ is given by the formula
\begin{gather}
\label{eqn:raction}
\begin{cases}
\mathsf{R}(\widehat{v}_{i} \otimes \widehat{v}_{j}) = z^{-\frac{i+j}{2}}q^{-\frac{i+j}{2}} \sum_{n=0}^{i} q^{(i-n)(j+n)} q^{\frac{n(n-1)}{4}} 
\frac{[n+j]_{q}!}{[n]_{q}![j]_{q}!}
\prod_{k=0}^{n-1}(z^{\half}q^{-\frac{1+k+j}{2}}-z^{-\half}q^{\frac{1+k+j}{2}}) \widehat{v}_{j+n}\otimes \widehat{v}_{i-n}. \\
\mathsf{R}(v_{i} \otimes v_{j}) = z^{-\frac{i+j}{2}}q^{-\frac{i+j}{2}} \sum_{n=0}^{i} q^{(i-n)(j+n)} q^{\frac{n(n-1)}{4}} \frac{[n+j]_{q}!}{[n]_{q}![j]_{q}!} \prod_{k=0}^{n-1}(z^{\half}q^{\frac{1-i+k}{2}}-z^{-\half}q^{-\frac{1-i+k}{2}}) v_{j+n}\otimes v_{i-n}
\end{cases}
\end{gather}

Let $\bL = \Z[q^{\pm 1},z^{\pm 1}] = \Z[e^{\pm \hbar},e^{\pm \hbar(\lambda-1)}] \subset \C[\lambda][[\hbar]]$ and let $V_{\bL}$ and $\widehat{V_{\bL}}$ be the sub free $\bL$-module of $\widehat{V_{\hbar,\lambda}}$ and $V_{\hbar,\lambda}$, spanned by $\{\widehat{v_{0}},\ldots\}$ and $\{v_{0},\ldots\}$, respectively.

Since all the coefficients of the action of $\mathsf{R}$ (\ref{eqn:raction}) lie in $\bL$, $\widehat{V_{\bL}}$ and $V_{\bL}$ are equipped with an $\bL B_{n}$-module structure. We denote the corresponding braid group representations by
\[ \widehat{\varphi_{\bL}}: B_{n} \rightarrow \GL( \widehat{V_{\bL}}{}^{\otimes n}), \ \ \  \varphi_{\bL}: B_{n} \rightarrow \GL( V_{\bL}^{\otimes n}). \] 

These are decomposed as finite dimensional representations as follows. For $m\geq 0$, define $\widehat{V_{n,m}} \subset \widehat{V_{\bL}}^{\otimes n}$ and $V_{n,m} \subset V_{\bL}^{\otimes n}$ by  

\begin{gather*}
\begin{cases}
\widehat{V_{n,m}}  = \textrm{ker}\; (q^{H}- q^{\frac{n\lambda -2m}{2}})  =  \textrm{span}\{\widehat{v}_{i_1}\otimes \cdots \otimes \widehat{v}_{i_n} \: | \: i_1+\cdots +i_n =m\}. \\
V_{n,m} = \textrm{ker}\; (q^{H}- q^{\frac{n\lambda -2m}{2}})  =  \textrm{span}\{v_{i_1}\otimes \cdots \otimes v_{i_n} \: | \: i_1+\cdots +i_n =m\} \end{cases}
\end{gather*}
By (\ref{eqn:raction}), the $B_{n}$-action preserves both $\widehat{V_{n,m}}$ and $V_{n,m}$ so we have linear representations 
\[ \widehat{\varphi^{V}_{n,m}}: B_{n} \rightarrow \GL(\widehat{V_{n,m}}) \ \textrm{ and } \varphi^{V}_{n,m}: B_{n} \rightarrow \GL(V_{n,m}) .\]
We call the $\bL B_{n}$-module $\widehat{V_{n,m}}$  the \emph{(generic) weight space} of weight $q^{\frac{n\lambda -2m}{2}}$.

By definition, as $\bL B_{n}$-modules, $\widehat{V_{\bL}}^{\otimes n}$ and $V_{\bL}^{\otimes n}$ split as
\begin{gather}
\label{eqn:Vnm}
\begin{cases}
\widehat{V_{\bL}}^{\otimes n} \cong \bigoplus_{m=0}^{\infty} \widehat{V_{n,m}} \\
V_{\bL}^{\otimes n} \cong \bigoplus_{m=0}^{\infty} V_{n,m}
\end{cases}
\end{gather}

Finally, we define the {\em space of (generic) null vectors} $\widehat{W_{n,m}}$ by
\[  \widehat{W_{n,m}} = \textrm{Ker}\, (E) \cap \widehat{V_{n,m}}. \]
Since the action of $B_{n}$ commutes with the action of $U_{q}(\sltwo)$, we have linear representation 
\[ \widehat{\varphi^{W}_{n,m}}: B_{n} \rightarrow \GL(\widehat{W_{n,m}}).\]

In \cite[Lemma 13]{jk}, it is shown that for $k=1,\ldots,m$, the map $F^{m-k} : \widehat{W_{n,k}} \rightarrow \widehat{V_{n,m}}$
is injection and that over the quotient field, $\widehat{V_{n,m}}$ splits as
\begin{equation}
\label{eqn:splitVnm}
\widehat{V_{n,m}} \cong_{\mQ} \bigoplus_{k=0}^{m} F^{m-k} \widehat{W_{n,k}} \cong_{\mQ} \bigoplus_{k=0}^{m} \widehat{W_{n,k}}, 
\end{equation}
hence combining with (\ref{eqn:Vnm}), we conclude that the $\bL B_{n}$-module $\widehat{V_{\bL}}^{\otimes n}$ splits, over the quotient field,
\begin{equation}
\label{eqn:splitV}
\widehat{V_{\bL}}^{\otimes n} \cong_{\mQ} \bigoplus_{m=0}^{\infty} \bigoplus_{k=0}^{m} \widehat{W_{n,k}}. 
\end{equation}

\subsection{Lawrence's homological representations}

Here we briefly review the definition of (geometric) Lawrence's representation $L_{n,m}$. An explicit matrix of $L_{n,m}(\sigma_{i})$ and some details will be given in Appendix.

For $i= 1,2,\ldots,n$, let $p_{i}=i \in \C$ and $D_{n}=\{ z \in \C \: | \: |z| \leq n+1 \} - \{p_{1},\ldots,p_{n}\}$ be the $n$-punctured disc. We identify the braid group $B_{n}$ with the mapping class group of $D_{n}$ so that the standard generator $\sigma_{i}$ corresponds to the {\em right-handed} half Dehn twist that interchanges the $i$-th and $(i+1)$-st punctures.

For $m>0$, let $C_{n,m}$ be the unordered configuration space of $m$-points in $D_{n}$,
\[ C_{n,m} =\{ (z_{1},\ldots,z_{m}) \in D_{n} \: | \: z_{i} \neq z_{j} \;(i\neq j) \} \slash S_{m} \]
where $S_{m}$ is the symmetric group acting as permutations of the indices.
For $i=1,\ldots,n$, let $d_{i}=(n+1)e^{(\frac{3}{2}+ i\varepsilon)\pi \sqrt{-1}}  \in \partial D_{n}$ where $\varepsilon >0$ is sufficiently small number, and we take $\mathbf{d} = \{d_{1},\ldots,d_{m}\}$ as a base point of $C_{n,m}$.

The first homology group $H_{1}(C_{n,m}; \Z)$ is isomorphic to $\Z^{\oplus n} \oplus \Z$, where the first $n$ components correspond to the meridians of the hyperplanes $\{z_{1}=p_{i}\}$ $(i=1,\ldots,n)$ and the last component corresponds to the meridian of the discriminant $\bigcup_{1\leq i < j \leq n }\{z_{i}=z_{j}\}$.

Let $\alpha: \pi_{1}(C_{n,m}) \rightarrow \Z^{2} = \langle x,d\rangle$ be the homomorphism obtained by composing the Hurewicz homomorphism $\pi_{1}(C_{n,m}) \rightarrow H_{1}(C_{n,m}; \Z)$ and the projection 
\[ H_{1}(C_{n,m}; \Z) = \Z^{\oplus n} \oplus \Z = \langle x_1,\ldots, x_n \rangle \oplus \langle d \rangle \rightarrow \langle x_1+ \cdots + x_n \rangle \oplus \langle d \rangle = \langle x \rangle \oplus \langle d \rangle =  \Z \oplus \Z.  \]

Let $\pi: \tC_{n,m} \rightarrow C_{n,m}$ be the covering corresponding to $\textrm{Ker}\,\alpha$. We fix a lift $\widetilde{\mathbf{d}} \in \pi^{-1}(\mathbf{d}) \subset \tC_{n,m}$ and use $\widetilde{\mathbf{d}}$ as a base point of $\tC_{n,m}$. By identifying $x$ and $d$ as deck translations, 
 $H_{m}(\tC_{n,m};\Z)$ has a structure of $\Z[x^{\pm 1},d^{\pm 1}]$-module.
Actually, it is known that $H_{m}(\tC_{n,m};\Z)$ is a free $\Z[x^{\pm 1},d^{\pm 1}]$-module of rank $\binom{m+n-2}{m}$.

We will actually use $H_{m}^{lf}(\tC_{n,m};\Z)$, the homology of locally finite chains, and consider a free sub $\Z[x^{\pm 1},d^{\pm 1}]$-module $\mathcal{H}_{n,m} \subset H_{m}^{lf}(\tC_{n,m};\Z)$ of rank $\binom{m+n-2}{m}$, spanned by homology classes represented by certain geometric objects called \emph{mutliforks}. The subspace $\mathcal{H}_{n,m}$ is preserved by $B_{n}$ actions, hence by using a natural basis of $\mathcal{H}_{n,m}$ called \emph{standard mutliforks}, we get a linear representation 
\[ L_{n,m} \co B_{n} \rightarrow \GL(\mathcal{H}_{n,m}) = \GL( {\textstyle \binom{m+n-2}{m}}; \Z[x^{\pm 1}, d^{\pm 1}]). \]
which we call {\em (Geometric) Lawrence's representation}.

In the case $m=1$ the discriminant $\bigcup_{1\leq i < j \leq n }\{z_{i}=z_{j}\}$ is empty so the variable $d$ does not appear. The representation 
\[ L_{n,1} \co  B_{n} \rightarrow \GL( n-1; \Z[x^{\pm1}]). \]
coincides with the reduced Burau representation.
The representation $L_{n,2}$ is often called the \emph{Lawrence-Krammer-Bigelow representation}, which is extensively studied in \cite{big0,kra,kra2} and known to be faithful.

\begin{remark}
In general, the braid group representations $\mathcal{H}_{n,m}$, $H_{m}(\tC_{n,m};\Z)$ and $H_{m}^{lf}(\tC_{n,m};\Z)$ are not isomorphic each other. However, there is an open dense subset $U \subset \C^{2}$ such that if we specialize $x$ and $d$ to complex parameters in $U$, then these three representations are isomorphic \cite{koh2}. Namely, all representations are \emph{generically} identical. 
In particular, they are all isomorphic over the quotient field, $\mathcal{H}_{n,m} \cong_{\mQ} H_{m}(\tC_{n,m};\Z) \cong_{\mQ} H_{m}^{lf}(\tC_{n,m};\Z)$.
\end{remark}

The following  well-known result will explain why the MMR conjecture is true.

\begin{proposition}
\label{prop:reduced}
When we specialize $d=-1$, then the Lawrence's representation $L_{n,m}$ is equal to $\Sym^{m}L_{n,1}$, the $m$-th symmetric power of the reduced Burau representation $L_{n,1}$.
\end{proposition}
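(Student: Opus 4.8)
The plan is to identify $L_{n,m}$, after the specialization $d=-1$, with a twisted homology of the configuration space, and then to exploit that $C_{n,m}$ is, away from its diagonal, the $m$-th symmetric power of $C_{n,1}=D_{n}$. Recall (cf.\ the Remark above) that over the quotient field $\mathcal{H}_{n,m}\cong_{\mQ}H_{m}^{lf}(C_{n,m};\mathcal{L})$, the homology with coefficients in the rank-one local system $\mathcal{L}$ on $C_{n,m}$ determined by $\alpha$, i.e.\ with monodromy $x$ around each wall $\{z_{1}=p_{i}\}$ and $d$ around the discriminant. Let $Y=(D_{n})^{m}\setminus\Delta$ be the ordered configuration space, where $\Delta=\bigcup_{i<j}\{z_{i}=z_{j}\}$; then $Y\to C_{n,m}$ is a free $S_{m}$-cover, and $B_{n}=\mathrm{MCG}(D_{n})$ acts on $Y$ commuting with the $S_{m}$-action and preserving the pulled-back coefficients. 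The first step is to descend: $H_{m}^{lf}(C_{n,m};\mathcal{L})\cong_{\mQ}H_{m}^{lf}(Y;\mathcal{L}|_{Y})^{S_{m}}$, $B_{n}$-equivariantly (working over the characteristic-zero field $\mQ$, so invariants agree with coinvariants and with a direct summand).

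Next, specialize $d=-1$. A meridian of the diagonal $\{z_{i}=z_{j}\}$ in the \emph{ordered} space $Y$ projects to the \emph{square} of the discriminant meridian of $C_{n,m}$ (a full twist of two points is a half twist performed twice), so $\mathcal{L}|_{Y}$ has monodromy $d^{2}=1$ there; hence $\mathcal{L}|_{Y}$ extends over the diagonal to the external tensor power $\mathcal{L}_{0}^{\boxtimes m}$ on $(D_{n})^{m}$, where $\mathcal{L}_{0}$ is the rank-one local system on $D_{n}$ with monodromy $x$ at each puncture and $H_{1}^{lf}(D_{n};\mathcal{L}_{0})\cong_{\mQ}\mathcal{H}_{n,1}=L_{n,1}$. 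Because $\mathcal{L}_{0}$, and its restriction to every diagonal stratum (where the puncture monodromies only get multiplied together into powers of $x$), is nontrivial, a dimension count shows the diagonal strata contribute nothing to $H_{m}^{lf}$ in the relevant degree, so $H_{m}^{lf}(Y;\mathcal{L}|_{Y})\cong_{\mQ}H_{m}^{lf}((D_{n})^{m};\mathcal{L}_{0}^{\boxtimes m})\cong_{\mQ}(H_{1}^{lf}(D_{n};\mathcal{L}_{0}))^{\otimes m}\cong_{\mQ}L_{n,1}^{\otimes m}$ by Künneth. Thus $L_{n,m}|_{d=-1}\cong_{\mQ}(L_{n,1}^{\otimes m})^{S_{m}}$, with $S_{m}$ acting by the geometric permutation of factors \emph{twisted by the equivariant structure of $\mathcal{L}$}, i.e.\ by the monodromy picked up when two configuration points are exchanged.

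The heart of the matter — the step one must get right — is this last twist. The exchange of two configuration points is exactly one discriminant meridian, so it contributes a single factor $d=-1$ per transposition; meanwhile the geometric permutation already acts on $(H_{1}^{lf})^{\otimes m}$ with the Künneth (Koszul) sign $(-1)$ per transposition, since all the classes lie in homological degree $1$. These two signs cancel, so $S_{m}$ acts on $L_{n,1}^{\otimes m}$ through the \emph{plain} permutation representation, and $(L_{n,1}^{\otimes m})^{S_{m}}=\Sym^{m}L_{n,1}$. Under this identification the standard multifork indexed by a composition $(a_{1},\dots,a_{n-1})$ of $m$ corresponds to the monomial $e_{1}^{a_{1}}\cdots e_{n-1}^{a_{n-1}}$ in the standard forks $e_{1},\dots,e_{n-1}$ of $L_{n,1}$, so the isomorphism is already defined over $\Z[x^{\pm1}]$ and carries a basis to a basis; since the intertwining of the $B_{n}$-actions holds over $\mQ$ it holds over $\Z[x^{\pm1}]$ as well, giving the asserted identity integrally. (Alternatively and more pedestrianly, one can take the explicit matrix of $L_{n,m}(\sigma_{i})$ from the Appendix, set $d=-1$, and check term by term that it coincides with $\Sym^{m}$ of the reduced Burau matrix of $\sigma_{i}$; because $\sigma_{i}$ only moves the punctures $i,i+1$ this is a finite computation with $q$-multinomial coefficients. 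Either way, the one subtle point is the same sign — equivalently, the fact that it is $\Sym^{m}$ and not $\bigwedge^{m}$ that appears; with $d=+1$ one would instead land on the exterior power.)
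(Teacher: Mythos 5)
Your key sign observation --- that the $d=-1$ monodromy picked up per transposition cancels the Koszul/orientation sign, so that one lands on $\Sym^{m}$ rather than $\bigwedge^{m}$ --- is exactly the paper's geometric point (via the orientation convention $\bF_{\tau}=\mathrm{sgn}(\tau)\bF$ for multiforks together with the deck-translation factor, so that at $d=-1$ a multifork class does not depend on the ordering of its forks), and your parenthetical fallback (set $d=-1$ in the explicit matrices (\ref{eqn:formula}) and compare with $\Sym^{m}$ of the Burau matrices) is in fact the paper's official proof. However, the argument you actually develop has a genuine gap at the step ``$H_{m}^{lf}(Y;\mathcal{L}|_{Y})\cong_{\mQ}H_{m}^{lf}((D_{n})^{m};\mathcal{L}_{0}^{\boxtimes m})$ because the diagonal strata contribute nothing by a dimension count.'' The stratum where exactly one pair of points collides is a $2(m-1)$-dimensional space whose twisted Borel--Moore homology (for generic $x$) is concentrated in degree exactly $m-1$, and in the long exact sequence for the closed/open decomposition of $(D_{n})^{m}$ into $\Delta$ and $Y$ this is precisely the degree that maps into $H_{m}^{lf}(Y)$; the dimension count therefore shows the opposite of what you claim. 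Concretely, for $n=m=2$ and generic $x$ one gets a short exact sequence $0 \to H_{2}^{lf}((D_{2})^{2};\mathcal{L}_{0}^{\boxtimes 2}) \to H_{2}^{lf}(Y;\mathcal{L}|_{Y}) \to H_{1}^{lf}(\Delta;\mathcal{L}|_{\Delta}) \to 0$ with the last term of rank $1$, so $H_{2}^{lf}(Y)$ has rank $2$ while $L_{2,1}^{\otimes 2}$ has rank $1$. Your conclusion could only be rescued by showing that these diagonal contributions are anti-invariant under the twisted $S_{m}$-action (the same $(-1)\cdot d$ bookkeeping you use later), but that is the real content of the step and it is missing.

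A second, related problem: the proposition is an identity of the explicit matrices of $L_{n,m}$ over $\Z[x^{\pm 1}]$ after the specialization $d=-1$, whereas your identifications ($\mathcal{H}_{n,m}\cong_{\mQ}H_{m}^{lf}$ of the cover, transfer to $S_{m}$-invariants, extension of the local system) are generic/quotient-field statements, and $d=-1$ is exactly a non-generic value at which $\mathcal{H}_{n,m}$, $H_{m}$ and $H_{m}^{lf}$ need not coincide and at which specialization need not commute with the $\cong_{\mQ}$ isomorphisms; the homology of the cover at $d=-1$ may be strictly larger than the span of the multiforks, which is the same kind of jump as in the computation above. The paper avoids both issues: its proof is the direct check on the matrices (\ref{eqn:formula}), and its geometric explanation stays entirely inside the multifork module, sending a multifork at $d=-1$ to the unordered family of its forks, i.e.\ to a monomial in $\Sym^{m}\mathcal{H}_{n,1}$, with no passage through the homology of the ordered configuration space. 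To repair your write-up, either carry out the $S_{m}$-equivariant analysis of the diagonal terms, or simply perform the finite matrix verification you mention only in passing.
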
 

Proposition \ref{prop:reduced} is directly seen by the explicit matrix formula of $L_{n,m}$ (\ref{eqn:formula}) in Appendix.

Roughly saying, when we specialized $d=-1$, that is, when we ignore the effect of discriminant $\bigcup_{1\leq i < j \leq n }\{z_{i}=z_{j}\}$, we forget an interaction of points. Then a natural inclusion $C_{n,m} \rightarrow C_{n,1}^{m} \slash S_m$ induces an isomorphism 
\[ H_{m}(\widetilde{C_{n,m}};\Z)|_{d=-1} \rightarrow H_{m}(\widetilde{C_{n,1}}^{m} \slash S_m;\Z) \cong H_{1}(\widetilde{C_{n,1}};\Z)^{\otimes m} \slash S_m = \Sym^{m}H_{1}(\widetilde{C_{n,1}};\Z) \]
of the braid group representations.

Here, we remark that somewhat confusing minus sign of $d$ comes from the convention of the orientation of submanifold representing an element of $H_{m}(\widetilde{C_{n,m}};\Z)$, as we will explain in Appendix.

\subsection{Identification and specializations of quantum and homological representations}

Here we summarize relations of braid group representations introduced in previous sections.
First, generically quantum representation is identified with Lawrence's representation.

\begin{theorem}\cite[Corollary 4.6]{i},\cite{koh2}
\label{theorem:WisH}
As a braid group representation, there is an isomorphism
\[ \widehat{W_{n,m}} \cong_{\mQ} \mathcal{H}_{n,m}|_{x=z^{-1}q,d=-q}. \]
\end{theorem}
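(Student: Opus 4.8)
The plan is to prove Theorem~\ref{theorem:WisH} by comparing the two $B_n$-representations via explicit matrices of the generators $\sigma_1,\dots,\sigma_{n-1}$ in suitable bases, after the change of variables $x=z^{-1}q$, $d=-q$. Both sides are free modules of the same rank: $\mathcal{H}_{n,m}$ is free of rank $\binom{m+n-2}{m}$ over $\Z[x^{\pm1},d^{\pm1}]$ by construction, and by \cite[Lemma~13]{jk} (cf.\ (\ref{eqn:splitVnm})) so is $\widehat{W_{n,m}}$ over $\bL=\Z[q^{\pm1},z^{\pm1}]$, the two coefficient rings being identified by the stated substitution. Since we only need an isomorphism over the fraction field $\mQ$, it suffices to produce a $\mQ$-basis of $\widehat{W_{n,m}}\otimes_{\bL}\mQ$ on which each $\sigma_i$ acts by the matrix $L_{n,m}(\sigma_i)|_{x=z^{-1}q,\,d=-q}$ recorded in the Appendix as (\ref{eqn:formula}). (There is also a more conceptual route, the one taken in \cite{koh2}: realize both representations as the monodromy of the Knizhnik--Zamolodchikov connection on $C_{n,m}$ with values in the relevant weight space --- the quantum side by the Drinfeld--Kohno theorem, the homological side by the integral presentation of KZ solutions --- and read off the dictionary $x=z^{-1}q$, $d=-q$ from the monodromy of the underlying rank-one local system. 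I would nevertheless carry out the direct matrix comparison, since it is self-contained given the Appendix.)

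First I would pin down the quantum side. As already noted, $\widehat{W_{n,m}}=\ker E\cap\widehat{V_{n,m}}$ is a $B_n$-subrepresentation of $\widehat{V_{\bL}}^{\otimes n}$ because the $R$-matrix braiding commutes with the $U_\hbar(\sltwo)$-action. I would then fix an explicit $\bL$-basis of $\widehat{W_{n,m}}$, for instance the iterated Clebsch--Gordan (``path'') basis $\{w_{\bk}\}$ indexed by the $(n-1)$-tuples $\bk=(k_2,\dots,k_n)$ of nonnegative integers with $k_2+\cdots+k_n=m$ (there are $\binom{m+n-2}{m}$ of them, matching the rank of $\mathcal{H}_{n,m}$), each $w_{\bk}$ being an explicit $\bL$-linear combination of the weight monomials $\widehat{v}_{i_1}\otimes\cdots\otimes\widehat{v}_{i_n}$, $i_1+\cdots+i_n=m$, with coefficients assembled from $q$-binomial coefficients as in \cite{jk}. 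Because $\sigma_i$ acts only on the $i$-th and $(i+1)$-st tensor factors, computing $\sigma_i\cdot w_{\bk}$ reduces to the action of $\mathsf{R}$ on $\widehat{V_{\bL}}\otimes\widehat{V_{\bL}}$ given by (\ref{eqn:raction}); re-expanding the result in the basis $\{w_{\bk}\}$ yields an explicit matrix over $\bL$.

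The remaining step --- identifying this matrix with $L_{n,m}(\sigma_i)$ --- is where I expect the real work, although it is bookkeeping rather than conceptual. On the homological side $\sigma_i$ is likewise local, being supported near the punctures $p_i,p_{i+1}$, so it acts on the standard multiforks through a small block whose entries are monomials in $x$ and $d$ recording the $\alpha$-weights of the loops dragged across; one checks block by block that after $x=z^{-1}q$, $d=-q$ this agrees with the $\mathsf{R}$-block above, and the off-block entries (which only encode how the $m$ points are distributed among the $n$ punctures) are then forced once the index sets $\{\bk\}$ and $\{\text{standard multiforks}\}$ are put in bijection. The main obstacle is tracking normalizations carefully enough that the two bases correspond on the nose, rather than merely up to an unspecified automorphism: the sign in $d=-q$ coming from the orientation convention for the representing cycles (flagged just before Proposition~\ref{prop:reduced}), the $q\leftrightarrow q^{2}$ discrepancy with \cite{i,jk}, and the scalar $e^{-\frac14\hbar\lambda^{2}}$ absorbed into $\mathsf{R}$, all of which must be propagated through the computation. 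As consistency checks I would confirm that specializing $d=-1$ collapses the quantum side to $\Sym^{m}$ of the Burau matrices (Proposition~\ref{prop:reduced}) and that the case $m=1$ recovers the reduced Burau representation directly from (\ref{eqn:raction}).
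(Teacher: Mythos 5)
The paper itself gives no proof of Theorem \ref{theorem:WisH}: the statement is imported from \cite[Corollary 4.6]{i} and \cite{koh2}, so the only proofs to compare with are the cited ones. Those proofs follow precisely the route you relegate to a parenthesis: for generic parameters both $\widehat{W_{n,m}}$ (via the Drinfeld--Kohno identification of the quantum braiding with KZ monodromy on the space of null vectors) and $\mathcal{H}_{n,m}$ (via hypergeometric-integral solutions of the KZ equation) are identified with the monodromy of one and the same local system on $C_{n,m}$, and the dictionary $x=z^{-1}q$, $d=-q$ is read off from the weights of that local system; genericity/quotient-field arguments then give $\cong_{\mQ}$.

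Your primary route has a genuine gap exactly where you declare the hard step to be ``bookkeeping rather than conceptual.'' Saying that it suffices to produce a $\mQ$-basis of $\widehat{W_{n,m}}\otimes_{\bL}\mQ$ on which each $\sigma_i$ acts by the matrices (\ref{eqn:formula}) is not a reduction of the problem: producing such a basis \emph{is} the theorem. The verification you describe is not routine. A Clebsch--Gordan-type basis of null vectors is not local with respect to the tensor factors (each $w_{\bk}$ mixes all factors), so the action of $\sigma_i$ computed from (\ref{eqn:raction}) does not automatically have the three-index structure of $L_{n,m}(\sigma_i)$ in (\ref{eqn:formula}), and the claim that the remaining entries ``are then forced once the index sets are put in bijection'' assumes the conclusion. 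Nor can you expect the natural bases to correspond ``on the nose'': the statement is only an isomorphism over the quotient field, and the natural lattices on the two sides differ, so a nontrivial change of basis with denominators is in general unavoidable (this is why $\cong_{\mQ}$ rather than $\cong$ appears throughout the paper). Indeed, the direct matrix comparison in \cite{jk} is carried out only in the Lawrence--Krammer--Bigelow case $m=2$, and the general case was settled by the KZ/hypergeometric argument of \cite{koh2}, not by entry-by-entry matching. Your consistency checks ($m=1$ giving reduced Burau, $d=-1$ giving $\Sym^m$ of Burau) are sound but do not close this gap; for a proof you should either carry out the KZ identification you sketch parenthetically or genuinely construct the basis whose existence you currently assert.
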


For $\alpha \in \{2,3,\ldots\}$, let $V_{\alpha}$ be the $\alpha$-dimensional irreducible $U_{q}(\sltwo)$ module and $\varphi_{\alpha}: B_{n} \rightarrow \GL(V_{\alpha}^{\otimes n})$ be the quantum representation.
Let $e:B_{n} \rightarrow \Z$ be the exponent sum map given by $e(\sigma_{i}^{\pm}) = \pm 1$. 
The representation $\varphi_{\alpha}$ can be recovered from a version of generic quantum representation as follows.

\begin{proposition}
\label{prop:LandQ}
Let $\beta \in B_{n}$ and $\alpha \in \{2,3,\ldots\}$.
Then
\[ e^{\frac{1}{4}\hbar(\alpha-1)^{2}e(\beta)}\varphi_{\bL}(\beta)|_{\lambda = \alpha -1} = \varphi_{\alpha}(\beta) \]
\end{proposition}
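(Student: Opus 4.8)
The plan is to compare the two braid group actions generator by generator, i.e. to check the claimed identity for $\beta = \sigma_i^{\pm 1}$ and then extend multiplicatively using that $e$ is a homomorphism $B_n \to \Z$ and that $\varphi_\alpha$ and $\varphi_\bL$ are representations. Since $e(\sigma_i \sigma_j) = e(\sigma_i) + e(\sigma_j)$ and the scalar $e^{\frac14 \hbar(\alpha-1)^2 e(\beta)}$ is multiplicative in $\beta$, it suffices to prove
\[ e^{\pm\frac14 \hbar(\alpha-1)^2}\,\varphi_{\bL}(\sigma_i^{\pm 1})\big|_{\lambda=\alpha-1} = \varphi_\alpha(\sigma_i^{\pm 1}), \]
and in fact only the case of $\sigma_i$ (the case of $\sigma_i^{-1}$ follows by taking inverses of both sides once the $\sigma_i$ case is known). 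Because both sides act as the identity on all tensor factors other than the $i$-th and $(i+1)$-st, one reduces immediately to the two-strand case $n=2$, $i=1$: one must identify $e^{\frac14\hbar(\alpha-1)^2}\,\mathsf{R}|_{\lambda=\alpha-1}$ acting on $V_\bL \otimes V_\bL$ with the $R$-matrix action defining $\varphi_\alpha$ on $V_\alpha \otimes V_\alpha$.

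The key computational input is the formula for $\mathsf{R}$ in (\ref{eqn:raction}). Recall $\mathsf{R} = e^{-\frac14\hbar\lambda^2}\, T\mathcal R$, so $e^{\frac14\hbar\lambda^2}\mathsf{R} = T\mathcal R$, and with $\lambda = \alpha - 1$ this scalar prefactor is exactly $e^{\frac14\hbar(\alpha-1)^2}$. Thus the claim becomes: specializing $\lambda$ to $\alpha - 1$ in the $T\mathcal R$-action on $V_{\hbar,\lambda}$ reproduces the usual braiding on $V_\alpha^{\otimes 2}$. First I would recall from the discussion after (\ref{eqn:vermamod}) that $V_{\hbar,\lambda}|_{\lambda=\alpha-1}$ (i.e. $c = \alpha-1 \in \{1,2,\ldots\}$) is precisely the $\alpha$-dimensional irreducible $U_\hbar(\sltwo)$-module: the vectors $v_j$ vanish for $j > \alpha - 1$ and (\ref{eqn:vermamod}) becomes the standard action on $V_\alpha$. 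Next, since $\mathcal R$ is the universal $R$-matrix of $U_\hbar(\sltwo)$ and $V_\alpha$ is an honest finite-dimensional module, $T\mathcal R$ acts on $V_\alpha \otimes V_\alpha$ exactly as the standard braiding used to define $\varphi_\alpha$ — one only needs to confirm that the normalization conventions for $\varphi_\alpha$ in the paper (framing, ribbon element, choice of $q$ versus $q^2$) match the bare $T\mathcal R$, with no extra scalar. In the $v_j$-basis the coefficients in the second line of (\ref{eqn:raction}), after multiplying by $e^{\frac14\hbar\lambda^2} = z^{?}\cdots$ — more precisely after removing the $z^{-\frac{i+j}{2}}q^{-\frac{i+j}{2}}$-type factors absorbed into the $e^{\frac{\hbar}{4}(H\otimes H)}$ piece — specialize to the classical $R$-matrix entries, which I would verify by a direct substitution $z = q^{\lambda-1} = q^{\alpha-2}$ in the product $\prod_{k=0}^{n-1}(z^{\half}q^{\frac{1-i+k}{2}} - z^{-\half}q^{-\frac{1-i+k}{2}})$ and checking it equals the expected $q$-integer product $[\alpha-1-i+?]_q\cdots$ appearing in the finite-dimensional braiding.

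The main obstacle I anticipate is purely bookkeeping: reconciling the three sources of scalar ambiguity — (i) the $e^{\frac{\hbar}{4}(H\otimes H)}$ factor in $\mathcal R$, which on the weight space $V_{n,m}$ acts by a power of $q$ depending on $\lambda$ and $m$ and is part of what makes the $z$-variable appear; (ii) the $e^{-\frac14\hbar\lambda^2}$ normalization built into $\mathsf{R}$; and (iii) whatever framing/ribbon normalization is implicit in the definition of $\varphi_\alpha$. The content of the proposition is exactly that (i) and (ii) together contribute the correction factor $e^{\frac14\hbar(\alpha-1)^2 e(\beta)}$ relative to (iii). I would make this precise by writing $e^{\frac{\hbar}{4}(H\otimes H)}$ on $V_{n,m}$ explicitly in terms of $\lambda$, evaluating at $\lambda = \alpha-1$, and tracking how many times it is applied when $\beta$ is written as a word of length $e(\beta)$ (up to the relations, which is why $e(\beta)$ rather than word length is the right invariant — the correction must be a braid invariant, and $e$ is the abelianization). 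Once the single-generator identity is checked with all scalars accounted for, the multiplicative extension is immediate, so I expect no difficulty there.
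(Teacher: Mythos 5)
Your proposal is correct and is essentially the paper's own (much terser) proof made explicit: the paper likewise deduces the formula from the isomorphism $V_{\alpha} \cong V_{\bL}|_{\lambda=\alpha-1}$ as $U_{q}(\sltwo)$-modules together with the definition $\mathsf{R}=e^{-\frac{1}{4}\hbar\lambda^{2}}T\mathcal{R}$, the scalar $e^{\frac{1}{4}\hbar(\alpha-1)^{2}e(\beta)}$ accumulating multiplicatively over the crossings via the exponent sum exactly as you argue. Your generator-by-generator reduction and the bookkeeping of the $e^{\frac{\hbar}{4}(H\otimes H)}$ and normalization factors are just an expanded version of that same argument, so no further comment is needed.
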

\begin{proof}
As we have seen, as $U_{q}(\sltwo)$-module we have an isomorphism $V_{\alpha} \cong V_{\bL}|_{\lambda=\alpha -1}$. The formula follows from this observation and the definition $\mathsf{R} = e^{-\frac{1}{4}\hbar \lambda^{2}} T \mathcal{R}$.
\end{proof}

Next we observe when we specialize $\lambda$ as integer, certain symmetry appears.

\begin{lemma}
\label{lemma:symmetry}
$V_{n,m}|_{\lambda = \alpha -1} \cong  V_{n,n\alpha-n-m}|_{\lambda = \alpha -1}$.
\end{lemma}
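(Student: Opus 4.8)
The plan is to exhibit an explicit $\bL B_n$-module isomorphism between $V_{n,m}|_{\lambda=\alpha-1}$ and $V_{n,n\alpha-n-m}|_{\lambda=\alpha-1}$. When $\lambda = \alpha-1$, the module $V_{\bL}|_{\lambda=\alpha-1}$ becomes the $\alpha$-dimensional irreducible $U_q(\sltwo)$-module $V_\alpha$ with basis $v_0,\ldots,v_{\alpha-1}$ (recall $v_j = 0$ for $j \geq \alpha$ by the defining relation, since $[\lambda+1-j]_q = [\alpha-j]_q$ vanishes at $j=\alpha$ and the product telescopes). The key observation is that $V_\alpha$ admits a symmetry: the linear map $v_i \mapsto v_{\alpha-1-i}$ is, up to normalization, an isomorphism intertwining the $U_q(\sltwo)$-actions after composing with the Hopf-algebra (anti)automorphism swapping $E \leftrightarrow F$ and $H \mapsto -H$. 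First I would fix the precise scalars: define $\psi \co V_\alpha \to V_\alpha$ by $\psi(v_i) = c_i v_{\alpha-1-i}$ for suitable $c_i \in \bL|_{\lambda=\alpha-1}$ chosen so that $\psi$ carries the action \eqref{eqn:vermamod} to the analogous action with $E$ and $F$ interchanged; comparing $F v_i = [i+1]_q v_{i+1}$ with the image relation forces a recursion on the $c_i$ that is solvable over $\bL|_{\lambda=\alpha-1}$ (all relevant $q$-integers $[1]_q,\ldots,[\alpha-1]_q$ are invertible there).

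Next I would promote $\psi$ to the $n$-fold tensor power $\psi^{\otimes n} \co V_\alpha^{\otimes n} \to V_\alpha^{\otimes n}$. Under the weight grading, $\psi^{\otimes n}$ sends the weight space $V_{n,m}|_{\lambda=\alpha-1} = \mathrm{span}\{v_{i_1}\otimes\cdots\otimes v_{i_n} : \sum i_j = m\}$ to $\mathrm{span}\{v_{i_1}\otimes\cdots\otimes v_{i_n} : \sum(\alpha-1-i_j) = m\}$, i.e.\ to $V_{n,n(\alpha-1)-m}|_{\lambda=\alpha-1} = V_{n,n\alpha-n-m}|_{\lambda=\alpha-1}$; so the bidegree bookkeeping works out exactly. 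The remaining point is $B_n$-equivariance. The braid action is built from the $R$-matrix $\mathsf{R}$ acting on adjacent tensor factors, so it suffices to check that $\psi^{\otimes 2}$ commutes with $\mathsf{R}|_{\lambda=\alpha-1}$ on $V_\alpha \otimes V_\alpha$. Since $\mathsf{R} = e^{-\frac14\hbar\lambda^2} T\mathcal{R}$ and $\mathcal{R}$ is the universal $R$-matrix, this reduces to the statement that $(\psi\otimes\psi)$ intertwines $\mathcal{R}$ with $\mathcal{R}$, which follows formally from the fact that $\psi$ realizes the Hopf-algebra automorphism $\theta \co E\mapsto F, F\mapsto E, H\mapsto -H$ together with the standard identity $(\theta\otimes\theta)(\mathcal{R}) = \mathcal{R}_{21}$ for $U_\hbar(\sltwo)$, combined with the flip $T$ in the definition of $\mathsf{R}$.

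I expect the main obstacle to be pinning down the correct normalizing scalars $c_i$ and verifying that $\psi$ genuinely implements $\theta$ (rather than $\theta$ composed with some extra inner automorphism or grading shift that would spoil the $R$-matrix identity). The cleanest route is probably to avoid scalar-chasing on $\mathcal{R}$ directly and instead argue as follows: $V_\alpha$ and its $\theta$-twist $V_\alpha^\theta$ are both $\alpha$-dimensional irreducibles of $U_q(\sltwo)$ with the same highest weight, hence isomorphic by a unique-up-to-scalar intertwiner, and one normalizes by demanding $\psi(v_0) = v_{\alpha-1}$; equivariance for the full braid group then comes for free from naturality of the $R$-matrix with respect to module isomorphisms. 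With that structural argument in hand, the only genuine computation left is the weight-shift bookkeeping, which is immediate. Finally, since all scalars involved are units in the relevant ring, $\psi^{\otimes n}$ restricts to an isomorphism of $\bL B_n$-modules (not merely over $\mQ$), giving the claim; a brief remark handling the edge cases $m=0$ and $m = n\alpha-n$ completes the argument.
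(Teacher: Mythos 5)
Your weight bookkeeping is fine, and a map $\psi(v_i)=c_iv_{\alpha-1-i}$ realizing the Cartan involution $\theta$ does exist (the intertwining relations with \eqref{eqn:vermamod} in fact force all the $c_i$ to be equal, so there is no scalar--chasing issue). The gap is in the equivariance step, which is the whole content of the lemma. From $(\theta\otimes\theta)(\mathcal{R})=\mathcal{R}_{21}$ you get, as operators on $V_\alpha\otimes V_\alpha$, $(\psi\otimes\psi)\circ\mathcal{R}=\mathcal{R}_{21}\circ(\psi\otimes\psi)=T\mathcal{R}T\circ(\psi\otimes\psi)$, and therefore
\[(\psi\otimes\psi)\,\mathsf{R}\,(\psi\otimes\psi)^{-1}=T\,\mathsf{R}\,T,\]
not $\mathsf{R}$: the flip $T$ in $\mathsf{R}=e^{-\frac14\hbar\lambda^2}T\mathcal{R}$ does not cancel the passage from $\mathcal{R}$ to $\mathcal{R}_{21}$, it merely transports it. So $\psi^{\otimes n}$ intertwines the given braid action with the braiding built from the opposite $R$-matrix, which is a genuinely different operator: by \eqref{eqn:raction}, $\mathsf{R}(v_i\otimes v_j)=\sum_{n\geq0}c_n\,v_{j+n}\otimes v_{i-n}$ only shifts weight from the first factor to the second, while $T\mathsf{R}T$ shifts it the other way, so the two differ as soon as any $n\geq1$ coefficient is nonzero, which is the case here. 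For the same reason the proposed shortcut via ``naturality of the $R$-matrix with respect to module isomorphisms'' does not apply: $\psi$ is not an endomorphism of the $U_{\hbar}(\sltwo)$-module $V_\alpha$ but an isomorphism onto its $\theta$-twist, and the braiding is natural only for honest module maps (a genuine module automorphism of $V_\alpha$ is a scalar by irreducibility, so nothing useful can be extracted that way).

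The repair is to insert the positional reversal that your own computation produces: the map $v_{i_1}\otimes\cdots\otimes v_{i_n}\mapsto v_{\alpha-1-i_n}\otimes\cdots\otimes v_{\alpha-1-i_1}$ conjugates $\mathsf{R}_{k,k+1}$ to $\mathsf{R}_{n-k,n-k+1}$, i.e.\ it intertwines $\varphi(\sigma_k)$ with $\varphi(\sigma_{n-k})$; since $\sigma_k\mapsto\sigma_{n-k}$ is the inner automorphism of $B_n$ given by conjugation by the Garside half twist, whose image under $\varphi$ preserves each weight space, this still yields the asserted isomorphism $V_{n,m}|_{\lambda=\alpha-1}\cong V_{n,n\alpha-n-m}|_{\lambda=\alpha-1}$, and the map is defined over $\bL|_{z=q^{\alpha-1}}$ as you wanted. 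This is in effect what the paper does: its proof verifies by direct computation with \eqref{eqn:raction} that the basis correspondence $v_i\otimes v_j\mapsto v_{\lambda-j}\otimes v_{\lambda-i}$ --- note the swap of the two tensor positions together with the weight reversal --- commutes with $\mathsf{R}$ (the identity $a_n=b_n$), which is exactly the two--strand version of the reversed map above; without the positional swap the coefficients do not match, and your claimed commutation of $\psi^{\otimes2}$ with $\mathsf{R}$ fails.
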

\begin{proof}
Let us put 
\[ \mathsf{R}(v_{i}\otimes v_{j}) = \sum_{n=0}^{\infty} a_n v_{j+n} \otimes v_{i-n}, \ \ \mathsf{R}(v_{\lambda-j}\otimes v_{\lambda-i}) = \sum_{n=0}^{\infty} b_n v_{\lambda-i+n} \otimes v_{\lambda-j-n}, \]
where $a_n,b_n \in \bL|_{z=q^{\alpha-1}}\cong \Z[q^{\pm 1}]$.
We show $a_{n}=b_{n}$ for all $i,j$. This shows an equivalence of $R$-operators hence proves the desired isomorphism.

Note that when the weight variable $\lambda$ is specialized as a positive integer $\alpha-1$, $v_{k}=0$ whenever $k \geq \lambda$, $a_n=b_n=0$ if $n > \min\{i,\lambda-j\}$. Hence we consider the case $n \leq \min\{i,\lambda-j\}$.

By (\ref{eqn:raction}), with putting $z=q^{\lambda-1}$, we have
\begin{eqnarray*}
 b_n & = &q^{\frac{\lambda}{2}(2\lambda-i-j)}q^{(\lambda-j-n)(\lambda-i+n)}q^{\frac{n(n-1)}{4}}\frac{[n+\lambda-i]_{q}!}{[n]_{q}![\lambda-i]_{q}!} \prod_{k=0}^{n-1}(q^{\half(j+k+1)}-q^{-\half(j+k+1)})\\
 & = & q^{-\frac{\lambda}{2}(i+j)}q^{(i-n)(j+n)}q^{\frac{n(n-1)}{4}}\frac{[n+\lambda-i]_{q}!}{[n]_{q}![\lambda-i]_{q}!} \prod_{k=0}^{n-1}(q^{\half(j+k+1)}-q^{-\half(j+k+1)})
\end{eqnarray*}

Since 
\[ \prod_{k=0}^{n-1}(q^{\half(j+k+1)}-q^{-\half(j+k+1)}) = \frac{[j+1]_{q}\cdots [j+n]_{q}}{(q^{\half}-q^{-\half})^{n}} = \frac{[j+n]_{q}!}{[j]_{q}!(q^{\half}-q^{-\half})^{n}}\]
we conclude
\begin{eqnarray*}
 b_n & = & q^{-\frac{\lambda}{2}(i+j)}q^{(i-n)(j+n)}q^{\frac{n(n-1)}{4}}\frac{[n+\lambda-i]_{q}!}{[n]_{q}![\lambda-i]_{q}!} \cdot \frac{[j+n]_{q}!}{[j]_{q}!(q^{\half}-q^{-\half})^{n}}\\
& = & q^{-\frac{\lambda}{2}(i+j)}q^{(i-n)(j+n)}q^{\frac{n(n-1)}{4}}\frac{[j+n]_{q}!}{[n]_{q}![j]_{q}!} \cdot \frac{[n+\lambda-i]_{q}!}{[\lambda-i]_{q}!(q^{\half}-q^{-\half})^{n}}\\
& = &  q^{-\frac{\lambda}{2}(i+j)}q^{(i-n)(j+n)}q^{\frac{n(n-1)}{4}}\frac{[j+n]_{q}!}{[n]_{q}![j]_{q}!} \prod_{k=0}^{n-1}(q^{\half(\lambda-i+k+1)}-q^{-\half(\lambda-i+k+1)})\\
& = & a_n
\end{eqnarray*}
\end{proof}

\section{A topological formula for the loop expansion of the Colored Jones poynomials}

Now we are ready to prove our main result, a topological formula of the loop expansion of the colored Jones polynomials.

\begin{theorem}
\label{theorem:main}
Let $K$ be an oriented knot in $S^{3}$ represented as a closure of an $n$-braid $\beta$. Then the loop expansion of the colored Jones polynomial is given by
\[ CJ_{K}(\hbar,z) = \frac{ z^{-\half e(\beta)} q^{\half e(\beta)} }{z^{\half} -z^{-\half} } \sum_{m=0}^{\infty}
(z^{\frac{n}{2}} q^{\frac{1-n-2m}{2}} - z^{-\frac{n}{2}} q^{\frac{-1+n+2m}{2}} ) tr L_{n,m}(\beta)|_{x=qz^{-1},d=-q}.
\]
Here we put $q=e^{\hbar}$.
\end{theorem}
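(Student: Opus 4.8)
The plan is to start from the standard quantum-group construction of the colored Jones polynomial as a partial trace of $\varphi_\alpha(\beta)$ on an appropriate weight space, and then systematically replace every quantum object by its homological counterpart using the results assembled in Section~2. Concretely, I would proceed as follows. First, recall that $J_{K,\alpha}(q)$ is obtained (up to the framing correction $e^{\frac14\hbar(\alpha-1)^2 e(\beta)}$ coming from the writhe, which is exactly the factor appearing in Proposition~\ref{prop:LandQ}) as the Markov trace of $\varphi_\alpha(\beta)$, i.e. as a suitable weighted sum of the ordinary traces $\operatorname{tr}\varphi^{V}_{n,m}(\beta)|_{\lambda=\alpha-1}$ over the weight spaces $V_{n,m}$, with a $q$-dimension weight $[\dim]$-type coefficient for each $m$. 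The key point is that only finitely many $m$ contribute when $\lambda=\alpha-1$ (since $v_k=0$ for $k\ge\alpha$), and the symmetry of Lemma~\ref{lemma:symmetry} pairs up the weight spaces $V_{n,m}$ and $V_{n,n\alpha-n-m}$; this pairing is what produces the two-term factor $(z^{n/2}q^{(1-n-2m)/2}-z^{-n/2}q^{(-1+n+2m)/2})$ in the final formula once one works on the generic module and lets the sum run over all $m\ge 0$.

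Second, I would pass from the finite-dimensional quantum representation to the generic one. Using $V_\alpha\cong V_{\bL}|_{\lambda=\alpha-1}$ and Proposition~\ref{prop:LandQ}, rewrite the trace formula in terms of $\operatorname{tr}\varphi^{V}_{n,m}(\beta)$ evaluated at $\lambda=\alpha-1$, i.e. $z=q^{\alpha-1}$. Then invoke the splitting (\ref{eqn:splitVnm})–(\ref{eqn:splitV}): over the quotient field $\widehat{V_{n,m}}\cong_{\mQ}\bigoplus_{k=0}^m F^{m-k}\widehat{W_{n,k}}$, and since $F$ is a $B_n$-equivariant isomorphism onto its image, $\operatorname{tr}\widehat{\varphi^{V}_{n,m}}(\beta)=\sum_{k=0}^m\operatorname{tr}\widehat{\varphi^{W}_{n,k}}(\beta)$. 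A telescoping/Abel-summation argument then converts the alternating combination of the $\operatorname{tr}\widehat{W}$'s that arises from the $q$-dimension weights into the clean single sum over $m$ of $\operatorname{tr}\widehat{W_{n,m}}$ against the two-term coefficient. Here one must be slightly careful that the $V$-module (not the $\widehat V$-module) is the one carrying the honest finite-dimensional quantum representation when $\lambda$ is a positive integer, and reconcile the two; the rescaling $v_j=[\lambda]_q\cdots[\lambda-j+1]_q\,\widehat v_j$ is an isomorphism over $\mQ$, so the traces agree over the quotient field, which is all we need since $CJ_K$ is being computed as a formal power series / rational function identity.

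Third, apply Theorem~\ref{theorem:WisH}: $\widehat{W_{n,m}}\cong_{\mQ}\mathcal{H}_{n,m}|_{x=z^{-1}q,\,d=-q}$, which turns $\operatorname{tr}\widehat{\varphi^{W}_{n,m}}(\beta)$ into $\operatorname{tr} L_{n,m}(\beta)|_{x=qz^{-1},d=-q}$, giving the right-hand side of Theorem~\ref{theorem:main} verbatim, with the prefactor $\dfrac{z^{-\frac12 e(\beta)}q^{\frac12 e(\beta)}}{z^{\frac12}-z^{-\frac12}}$ collecting the writhe correction from Proposition~\ref{prop:LandQ} together with the overall $1/[\alpha]_q$-type normalization (recall $J_{\mathsf{Unknot},\alpha}=1$, and $[\alpha]_q=\dfrac{z^{1/2}q^{1/2}-z^{-1/2}q^{-1/2}}{q^{1/2}-q^{-1/2}}$ at $z=q^{\alpha-1}$, whose limiting behavior as $\hbar\to0$ produces the $z^{1/2}-z^{-1/2}$ in the denominator after the loop-expansion substitution). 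Finally, I would check that the identity, established a priori for each integer $\alpha\ge 2$ via $z=q^{\alpha-1}$, upgrades to an identity of the loop expansion $CJ_K(\hbar,z)$ in the two formal variables $\hbar$ and $z$: both sides are, after clearing denominators, elements of a suitable completed ring, and agreement on the Zariski-dense set $\{z=e^{\hbar(\alpha-1)}:\alpha\in\Z_{\ge2}\}$ forces equality.

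I expect the main obstacle to be the bookkeeping in the second step: getting the coefficients and the telescoping exactly right, so that the alternating sum of $q$-dimensions weighting the $V_{n,m}$ collapses — using Lemma~\ref{lemma:symmetry} to fold the finite range $0\le m\le n(\alpha-1)$ in half and then reinterpreting the result as a full sum over $m\ge0$ on the generic side — precisely reproduces the binomial-free two-term factor $(z^{n/2}q^{(1-n-2m)/2}-z^{-n/2}q^{(-1+n+2m)/2})$. The rest is a matter of carefully tracking the framing factors through Proposition~\ref{prop:LandQ} and the normalization, and of the (by now standard) density argument to pass from the integral specializations to the formal loop expansion.
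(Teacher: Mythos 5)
Your proposal follows essentially the same route as the paper's proof: start from the trace formula for $J_{K,\alpha}$, correct the framing via Proposition \ref{prop:LandQ}, decompose over the weight spaces $V_{n,m}$ using Lemma \ref{lemma:symmetry} and the splitting (\ref{eqn:splitVnm})--(\ref{eqn:splitV}) so that the weight sum collapses to the $q$-number $[n\lambda+1-2m]_q$ (your ``telescoping'' step), substitute $\operatorname{tr}L_{n,m}(\beta)|_{x=z^{-1}q,d=-q}$ via Theorem \ref{theorem:WisH}, and finally pass to the loop expansion by treating $\hbar\alpha$ (equivalently $z$) as an independent variable in the limit $\alpha\to\infty$. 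The argument and the key ingredients match the paper's proof, so the proposal is correct.
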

\begin{proof}
The colored Jones polynomial $J_{K,\alpha}(q)$ is defined by
\[ J_{K,\alpha}(q)= \frac{1}{[\alpha]_{q}}q^{-\frac{1}{4}(\alpha^{2}-1)e(\beta)} \tr(q^{\frac{H}{2}} \varphi_{\alpha}(\beta)). \]
By Proposition \ref{prop:LandQ},
\[ J_{K,\alpha}(q)= \frac{1}{[\alpha]_{q}}q^{-\frac{1}{4}(\alpha^{2}-1)e(\beta)} q^{\frac{1}{4}(\alpha-1)^{2}e(\beta)} \tr(q^{\frac{H}{2}}\varphi_{\bL}(\beta)|_{\lambda = \alpha -1}). \]
The colored Jones function $CJ_{K}(z,\hbar)$ is obtained by taking the limit $\alpha \to \infty$ keeping $z = e^{\hbar \alpha}$ is constant, namely treating $\hbar \alpha$ as an independent variable:
\[ CJ_{K}(\hbar, z) = \frac{q^{\half}-q^{-\half}}{z^{\half} -z^{-\half} }
z^{-\half e(\beta)}q^{\half e(\beta)} 
\lim_{
\begin{subarray}{c}
\alpha \to \infty \\ \hbar \alpha: \textrm{constant}
\end{subarray}
}  \tr(q^{\frac{H}{2}} \varphi_{\bL}(\beta)|_{\lambda = \alpha -1}) \]

We compute the limit as follows (see Figure \ref{fig:summary} for a diagrammatic summary of the computation.)

\begin{figure}[htbp]
\centerline{\includegraphics[bb=138 444 557 708, width=140mm]{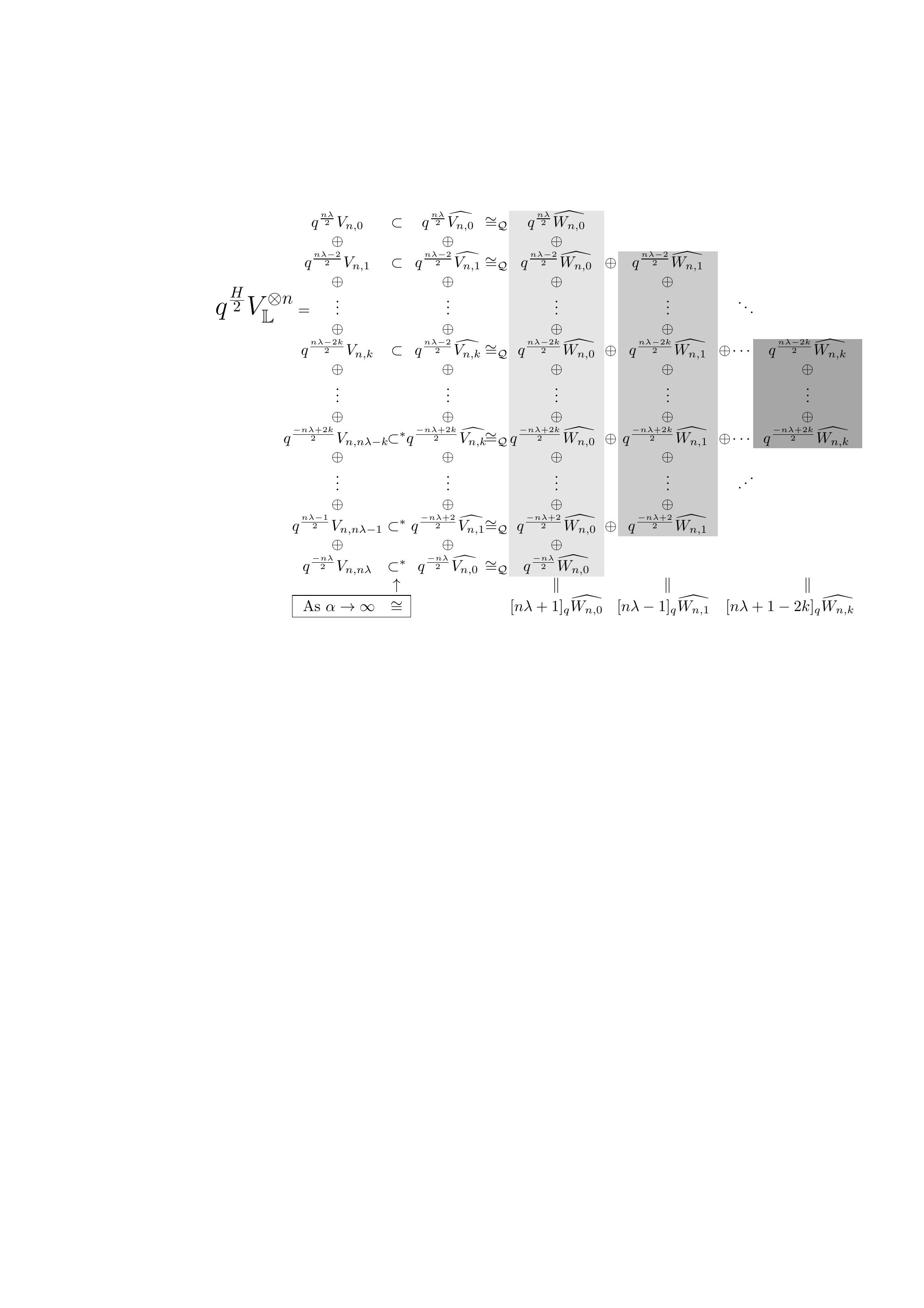}}

\caption{ This diagram explains how to compute the desired limit.
In the diagram, every representations are understood as taking specialization $\lambda = \alpha -1$. The notation $\subset^{*}$ means that we regard $V_{n,n\lambda-i}$ as sub module of $\widehat{V_{n,i}}$, by using isomorphism $V_{n,n\lambda-i}\cong V_{n,i}$ in Lemma \ref{lemma:symmetry}.}
\label{fig:summary}
\end{figure}

Since $V_{n,i}|_{\lambda = \alpha -1} = 0$ if $i >n\lambda = n(\alpha -1)$, by (\ref{eqn:splitV}), as a $\bL B_{n}$-module $V_{\bL}^{\otimes n} \cong \bigoplus_{i=0}^{n\lambda} V_{n,i}$. Moreover, $q^{\frac{H}{2}}$ acts on $V_{n,i}$ as a scalar multiple by $q^{\frac{n\lambda -2i}{2}}$, so 
\[ \tr(q^{\frac{H}{2}} \varphi_{\bL}(\beta)|_{\lambda = \alpha -1}) = \sum_{i=0}^{n\lambda} q^{\frac{n\lambda -2i}{2}} \tr (\varphi^{V}_{n,i}(\beta)|_{\lambda = \alpha-1}). \]
By Lemma \ref{lemma:symmetry}, we identify the braid group representation $V_{n,n\lambda-k}$ as $V_{n,k}$ for $k \leq \frac{n\lambda}{2}$, and then regard each $V_{n,i}$ as a sub $\bL B_{n}$-module of $\widehat{V_{n,i}}$. Recall that $\tr (\varphi^{V}_{n,i}(\beta)|_{\lambda = \alpha-1})$ is equal to $\tr (\widehat{\varphi^{V}_{n,i}}(\beta)|_{\lambda = \alpha-1})$ when $\alpha$ is treated as independent variable. By (\ref{eqn:splitVnm}), over quotient field, $\widehat{V_{n,i}}$ splits as $\bigoplus_{m=0}^{i}\widehat{W_{n,m}}$, hence
\[  \tr (\widehat{\varphi^{V}_{n,i}}(\beta)|_{\lambda = \alpha-1}) = \sum_{m=0}^{\min\{m, n\lambda -m\}} tr (\widehat{\varphi^{W}_{n,m}}(\beta)|_{\lambda = \alpha-1}) \]
This shows
\begin{eqnarray*}
\sum_{i=0}^{\alpha-1} q^{\frac{n\lambda -2i}{2}} \tr (\widehat{\varphi^{V}_{n,i}}(\beta)|_{\lambda = \alpha-1}) & =  & \sum_{i=0}^{\alpha-1} q^{\frac{n\lambda -2i}{2}} \sum_{m=0}^{\min\{m, n\lambda -m\}} \tr (\widehat{\varphi^{W}_{n,i}}(\beta)|_{\lambda = \alpha-1}) \\
& = & \sum_{m=0}^{\frac{n\lambda}{2}} \sum_{i=m}^{n\lambda-m} q^{\frac{n\lambda -2i}{2}} \tr (\widehat{\varphi^{W}_{n,m}}(\beta)|_{\lambda = \alpha-1})\\
& = &  \sum_{m=0}^{\frac{n\lambda}{2}} [n\lambda +1-2m]_{q} \tr (\widehat{\varphi^{W}_{n,m}}(\beta)|_{\lambda = \alpha-1})
\end{eqnarray*}

By Theorem \ref{theorem:WisH}, when we treat $\alpha$ as independent variable, 
\[ \tr (\widehat{\varphi^{W}_{n,i}}(\beta)) = \tr(L_{n,m}(\beta)|_{x=z^{-1}q, d=-q})\]
hence 
\begin{eqnarray*}
\lim_{
\begin{subarray}{c}
\alpha \to \infty \\ \hbar \alpha: \textrm{constant}
\end{subarray}
}\!\!\!\!
\tr(q^{\frac{H}{2}}\circ \varphi_{\bL}(\beta)|_{\lambda = \alpha -1})
& = &
\lim_{
\begin{subarray}{c}
\alpha \to \infty \\ \hbar \alpha: \textrm{constant}
\end{subarray}
}
\!\!\!\! \sum_{m=0}^{ \min\{m, n\lambda -m\}} [n\lambda+1-2m]_{q} \tr (L_{n,m}(\beta)|_{x=z^{-1}q, d=-q})\\
& = & \sum_{m=0}^{\infty} \frac{ z^{\frac{n}{2}} q^{\half(1-n-2m)} - z^{-\frac{n}{2}} q^{-\half(1-n-2m)} }{q^{\half}-q^{-\half}} \tr (L_{n,m}(\beta)|_{x=z^{-1}q, d=-q}).\\
\end{eqnarray*}
Therefore we conclude
\[ CJ_{K}(\hbar,z) = \frac{ z^{-\half e(\beta)} q^{\half e(\beta)} }{z^{\half} -z^{-\half} } \sum_{m=0}^{\infty}(z^{\frac{n}{2}} q^{\half(1-n-2m)} - z^{-\frac{n}{2}} q^{-\half(1-n-2m)}) \tr (L_{n,m}(\beta)|_{x=z^{-1}q, d=-q}).
\]
\end{proof}

As we have mentioned, Theorem \ref{theorem:main} provides an alternative, direct method to compute the loop expansion of the colored Jones polynomial although actual computation may be quite hard, since one should know $L_{n,m}(\beta)$ for all $m$. Here we give sample calculations.

\begin{example}[Unknot]
Let us consider the unknot $K$ represented as a closure of 2-braid $\sigma_{1}$. The trace of Lawrence's representation is given by 
$\tr L_{2,m}(\sigma_{1})= (-x)^{m}(-d)^{\binom{m}{2}}$ so 
\begin{eqnarray*}
CJ_{\textsf{Unknot}}(z,\hbar) & = & \frac{ z^{-\half} q^{\half} }{z^{\half} -z^{-\half} } \sum_{m=0}^{\infty}
(z q^{\frac{-1-2m}{2}} - z^{-1} q^{\frac{1+2m}{2}} ) (-z)^{-m}q^{\binom{m+1}{2}}\\
& = & \frac{ z^{-\half} q^{\half} }{z^{\half} -z^{-\half} }(zq^{-\half} -q^{-\half}) \\
& = & 1.
\end{eqnarray*}
\end{example}

\begin{example}[$(2,p)$-torus knot]

More generally, let us consider $(2,p)$-torus knot $T(2,p)$ represented as a closure of 2-braid $\sigma_{1}^{p}$.
The trace of Lawrence's representation is given by $\tr L_{2,m}(\sigma_{1}^{p})= (-x^{p})^{m}(-d^{p})^{\binom{m}{2}}$ so 
\begin{equation}
\label{eqn:CJ}
CJ_{T(2,p)}(z,\hbar)  =  \frac{ z^{-\half p} q^{\half p} }{z^{\half} -z^{-\half} } \sum_{m=0}^{\infty}
(z q^{\frac{-1-2m}{2}} - z^{-1} q^{\frac{1+2m}{2}} ) (-z^{-p})^{m}q^{\binom{m+1}{2}p}
\end{equation}

To compute the $1$-loop part, let us put $\hbar=0$. Then
\begin{eqnarray*}
V^{(0)}_{T(2,p)} (z)  & = & \frac{ z^{-\half p} }{z^{\half} -z^{-\half} } \sum_{m=0}^{\infty}
(z - z^{-1}  ) (-z^{-p})^{m} =  
(z^{\half}+z^{-\half})z^{-\frac{p}{2}}\sum_{m=0}^{\infty} (-z^{-p})^{m}\\
&= &(z^{\half} + z^{-\half})z^{-\frac{p}{2}} \frac{1}{1 + z^{-p}}\\
& = & \frac{z^{\half}+z^{-\half}}{z^{\half p} + z^{-\half p}}
\end{eqnarray*}
which is equal the inverse of the Alexander-Conway polynomial of $T(2,p)$.

Next let us compute the 2-loop part. By putting $q = e^{\hbar}$ and looking at the coefficient of $\hbar$ in (\ref{eqn:CJ}), we have
\begin{eqnarray*}
 V^{(1)}_{T(2,p)}(z) & = & \frac{z^{-\half p}}{z^{\half} -z^{-\half} }\left(
\frac{z}{2}\sum_{m=0}^{\infty} [pm^{2}+(p-2)m+(p-1)](-z^{-pm})\right. \\
& & \ \ \ \ \ \ \ \ \ \ \ \ \ \ \ \left. - \frac{z^{-1}}{2} \sum_{m=0}^{\infty} [pm^{2}+(p+2)m+(p+1)](-z^{-pm}) \right)
\end{eqnarray*}
In the ring of formal power series $\C[[z,z^{\pm 1}]]$, we have
\[ \sum_{m=0}^{\infty} z^{m} = \frac{1}{1-z}, \ \sum_{m=0}^{\infty} m z^{m}=\frac{z}{(1-z)^{2}},\ \sum_{m=0}^{\infty} m^{2}z^{m} = \frac{z+z^{2}}{(1-z)^{3}}.\]
Hence 
\[ V^{(1)}_{T(2,p)}(z) = \frac{1}{(z^{\half}-z^{-\half})(z^{\half p} + z^{-\half p})^{3}} \left( (p-1)z^{p+1} -(p+1)z^{p-1} + (p+1)z^{-p+1} -(p-1)z^{-p-1}\right).\]
For example, 
\[ V^{(1)}_{T(2,3)}(z) = \frac{(z^{4}-2z^{2}+2-2z^{-2}+z^{-4})}{\Delta_{T(2,3)}(z)^{3}} = \frac{(z^{4}-2z^{2}+2-2z^{-2}+z^{-4})}{(z-1+z^{-1})^{3}}.\]
\end{example}

Now it is a direct consequence that the 1-loop part is the inverse of the Alexander-Conway polynomial.

\begin{corollary}[Melvin-Morton-Ronzansky conjecture]
\label{cor:MMR}
\[ V^{(0)}(z) = \frac{1}{\Delta_{K}(z)}\]
\end{corollary}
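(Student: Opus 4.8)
The plan is to extract the $\hbar=0$ coefficient $V^{(0)}(z)$ directly from the topological formula of Theorem \ref{theorem:main}, using Proposition \ref{prop:reduced} to recognize the specialized Lawrence representations as symmetric powers of the reduced Burau representation, and then summing the resulting geometric-type series. First I would set $\hbar=0$, equivalently $q=1$, in the formula
\[ CJ_{K}(\hbar,z) = \frac{z^{-\half e(\beta)} q^{\half e(\beta)}}{z^{\half}-z^{-\half}} \sum_{m=0}^{\infty}(z^{\frac{n}{2}} q^{\frac{1-n-2m}{2}} - z^{-\frac{n}{2}} q^{\frac{-1+n+2m}{2}}) \tr L_{n,m}(\beta)|_{x=qz^{-1},d=-q}. \]
At $q=1$ the prefactor becomes $z^{-\half e(\beta)}/(z^{\half}-z^{-\half})$, the weight factor collapses to $z^{\frac{n}{2}}-z^{-\frac{n}{2}}$ independent of $m$, and the specialization of the Lawrence representation becomes $L_{n,m}(\beta)|_{x=z^{-1},\,d=-1}$. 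By Proposition \ref{prop:reduced}, $L_{n,m}|_{d=-1} = \Sym^{m}L_{n,1}$, so $\tr L_{n,m}(\beta)|_{x=z^{-1},d=-1} = \tr \Sym^{m}(L_{n,1}(\beta)|_{x=z^{-1}})$.

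The key step is then the generating-function identity for traces of symmetric powers: if $A$ is a square matrix with eigenvalues $\mu_{1},\dots,\mu_{n-1}$, then $\sum_{m\geq 0} \tr(\Sym^{m}A)\, t^{m} = \prod_{i}(1-\mu_{i} t)^{-1} = 1/\det(I - tA)$. Applying this with $A = L_{n,1}(\beta)|_{x=z^{-1}}$ and $t=1$ gives $\sum_{m\geq 0}\tr\Sym^{m}(L_{n,1}(\beta)|_{x=z^{-1}}) = 1/\det(I - L_{n,1}(\beta)|_{x=z^{-1}})$, which is a well-known formula for the Alexander polynomial: for $K$ the closure of $\beta\in B_n$, one has (up to the standard normalization) $\det(I - L_{n,1}(\beta)|_{x=t}) = \pm t^{?}\,\frac{\Delta_{K}(t)}{\,?\,}$, more precisely the Burau formula
\[ \frac{\Delta_{K}(t)}{1} = \pm t^{\bullet}\,\frac{\det(I - \overline{\beta}(t))}{1 + t + \cdots + t^{n-1}}\cdot(\text{unit}), \]
i.e. $\det(I-\overline{\beta}(t)) = (\text{unit})\cdot \Delta_{K}(t)\cdot\frac{1-t^{n}}{1-t}$ in the suitable normalization. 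Combining this with the prefactor $z^{-\half e(\beta)}(z^{\frac{n}{2}}-z^{-\frac{n}{2}})/(z^{\half}-z^{-\half})$ and tracking all the units, the $\frac{1-t^{n}}{1-t}$-type factor cancels against $(z^{\frac{n}{2}}-z^{-\frac{n}{2}})/(z^{\half}-z^{-\half})$, leaving exactly $1/\Delta_{K}(z)$.

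The main obstacle is bookkeeping of normalizations: the Alexander polynomial $\Delta_K$ is only defined up to $\pm z^{k}$, and the classical Burau–Alexander formula involves a choice of reduced Burau (the $(n-1)$-dimensional $L_{n,1}$ here), the exponent-sum correction $z^{-\half e(\beta)}$, and the symmetrization $z\mapsto z$ vs $z\mapsto z^{\half}-z^{-\half}$ implicit in $\Delta_K(z)=\nabla_K(z^{\half}-z^{-\half})$. I would pin these down by comparing with the worked example $T(2,p)$ already computed in the excerpt, where $V^{(0)}_{T(2,p)}(z) = (z^{\half}+z^{-\half})/(z^{\half p}+z^{-\half p}) = 1/\Delta_{T(2,p)}(z)$, and also by noting that both sides are determined by the same topological input (the infinite cyclic cover), so matching leading behavior suffices. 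An alternative, perhaps cleaner, route avoiding the Burau–Alexander formula is to invoke the identification $H_m(\widetilde{C_{n,m}})|_{d=-1}\cong \Sym^m H_1(\widetilde{C_{n,1}})$ of braid representations (the isomorphism displayed after Proposition \ref{prop:reduced}), so that the whole sum $\sum_m \tr\Sym^m$ is recognized as computing the equivariant homology of the infinite cyclic cover of the knot complement, whose Reidemeister-torsion interpretation directly yields $1/\Delta_K(z)$; I would present the determinant/generating-function argument as the primary one since it is the most self-contained.
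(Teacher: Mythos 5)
Your proposal follows essentially the same route as the paper: specialize $\hbar=0$ (so $d=-1$), apply Proposition \ref{prop:reduced} to get symmetric powers of the reduced Burau representation, sum the traces via $\sum_m \tr \Sym^m A = \det(I-A)^{-1}$ (the paper cites this as the MacMahon Master Theorem), and conclude with the classical Burau--Alexander determinant formula, whose normalization factors cancel against the prefactor. The paper is just as terse as you are about the final unit/normalization bookkeeping, so your argument matches its proof in both structure and substance.
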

\begin{proof}
The $1$-loop part $V^{(0)}(z)$ is obtained by putting $\hbar=0$ in the formula of Theorem \ref{theorem:main}.
Since $d=-q=-e^{\hbar}$, in a homological representation, putting $\hbar=0$ corresponds to putting $d=-1$. 
As we have pointed out in Proposition \ref{prop:reduced},
\[ L_{n,m}(\beta)|_{d=-1} = \Sym^{m}L_{n,1}(\beta). \]

Therefore, by Theorem \ref{theorem:main}, the $1$-loop part $V^{(0)}(z)$ is written as
\begin{eqnarray*}
V^{(0)}(z) & = &  z^{-\half e(\beta)} \frac{z^{\frac{n}{2}}-z^{-\frac{n}{2}}}{z^{\half}-z^{-\half}} \sum_{i=0}^{\infty}\tr L_{n,i}(\beta)|_{x=z^{-1}, d=-1} \\
& = & z^{-\half e(\beta)} \frac{z^{\frac{n}{2}}-z^{-\frac{n}{2}}}{z^{\half}-z^{-\half}} \sum_{i=0}^{\infty} \tr (\Sym^{i}L_{n,1})(\beta)|_{x=z^{-1}}.
\end{eqnarray*}
MacMahon Master Theorem says that
\[ \sum_{i=0}^{\infty} \tr (\Sym^{i}L_{n,1})(\beta) = \det(I-L_{n,1}(\beta))^{-1} \]
hence
\begin{eqnarray*}
V^{(0)}(z) = z^{-\half e(\beta)} \frac{z^{\frac{n}{2}}-z^{-\frac{n}{2}}}{z^{\half}-z^{-\half}} \frac{1}{\det(I-L_{n,1}(\beta)|_{x=z^{-1}})} = \frac{1}{\Delta_{K}(z^{-1})} = \frac{1}{\Delta_{K}(z)}. \end{eqnarray*}
\end{proof}

\section{Entropy and colored Jones polynomials}
\label{sec:ent}

In this section we give an application of topological interpretation of quantum representations. 

\subsection{Entropy estimates from configuration space}

For a homeomorphism of a compact topological space or a metric space $f: X \rightarrow X$, there is a fundamental numerical invariant $h(f) \in \R$ of topological dynamics called the \emph{(topological) entropy}. 

Let $\overline{C}_{m}(X)$ and $C_{m}(X)$ be the \emph{ordered} and \emph{unordered configuration space} of $m$-points of $X$,
\[\overline{C}_{m}(X) =\{(x_{1}\ldots,x_{m}) \in X^{m} \: | \: x_{i}\neq x_{j}\}, \ \ C_{m}(X) = \overline{C}_{m}(X) \slash S_{m}\]
where $S_{m}$ is the symmetric group that acts as permutations of the coordinates.
Then $f$ induces the continuous maps $\overline{C}_{m}(f): \overline{C}_{m}(X) \rightarrow \overline{C}_{m}(X)$ and $C_{m}(f): C_{m}(X) \rightarrow C_{m}(X)$, respectively.

Note that $\overline{C}_{m}(X) \subset X^{m}$ is invariant under $f^{\times m}:X^{m} \rightarrow X^{m}$ so
\[ h(\overline{C}_m(f)) \leq h(f^{\times m}) = m h(f). \]
The unordered configuration space $\overline{C}_{m}(X)$ is a finite cover of $C_{m}(X)$ so $ h(C_m(f)) = h(\overline{C}_{m}(f))$.

Now for $A \in \GL(n;\C)$ let $\rho(A)$ be its spectral radius of $A$, the maximum of the absolute value of the eigenvalues of $A$.
It is known that if $C_{m}(f)$ is nice enough (see \cite{fr}, for sufficient conditions for inequality (\ref{eqn:Shubconj}) to hold), then spectral radius of the induced action on homology provides a lower bound of the entropy
\begin{equation}
\label{eqn:Shubconj}
\log \rho(C_{m}(f)_{*}: H_{*}(C_{m}(f),\Z) \rightarrow H_{*}(C_{m}(f),\Z)) \leq h(C_m(f)).
\end{equation}
Hence if $f$ is good enough, by using configuration spaces we have an estimate of entropy
\begin{equation}
\label{eqn:ent}
\log \rho(C_{m}(f)_{*}) \leq m h(f). 
\end{equation}

The above considerations nicely fit for the braid groups. Let us regard the braid group $B_n$ as the mapping class group of $n$-punctured disc $D_{n}$. The entropy of braid $\beta \in B_{n}$ is defined by the infimum of entropy of homeomorphisms representing $\beta$,
\[ h(\beta) = \inf\{h(f)\: | f: D_n \rightarrow D_n, [f]=\beta \in MCG(D_{n})=B_n \}.\]
By Nielsen-Thurston classification \cite{flp,th}, there is a representative homeomorphism $f_{\beta}$ that attains the infimum so $h(\beta)=h(f_{\beta})$. In particular, if $\beta$ is pseudo-Anosov, then a psuedo-Anosov representative attains the infimum. By abuse of notation, we will use the same symbol $\beta$ to mean its representative homeomoprhism $f_{\beta}$ that attains the infimum of the entropy. 

As Koberda shows in \cite{kob}, the inequality (\ref{eqn:Shubconj}) holds in the case $X$ is surface. This implies that Lawrence's representation gives an estimate of entropy.

\begin{theorem}
\label{theorem:estimate}
For an $n$-braid $\beta$,
\[ \sup_{|x|=1,|d|=1} \log \rho(L_{n,m}(\beta)) \leq m h(\beta)\]
\end{theorem}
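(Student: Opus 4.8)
The plan is to realize $L_{n,m}(\beta)$ as (a summand of) an action on the homology of a configuration space, and then invoke the entropy inequality (\ref{eqn:Shubconj}) together with the naturality of entropy under covers. Concretely, let $f_\beta : D_n \to D_n$ be a homeomorphism representing $\beta$ that attains the infimum of the entropy, so $h(f_\beta) = h(\beta)$. The induced map $C_m(f_\beta) : C_{n,m} \to C_{n,m}$ on the unordered configuration space of $m$ points in $D_n$ is the geometric model for the braid action, and by the discussion preceding the theorem (Koberda's result \cite{kob} that (\ref{eqn:Shubconj}) holds for surfaces and hence for $D_n$), we get
\[
\log \rho\bigl(C_m(f_\beta)_* : H_*(C_{n,m};\Z) \to H_*(C_{n,m};\Z)\bigr) \leq h(C_m(f_\beta)).
\]
The right-hand side is bounded by $m\, h(f_\beta) = m\, h(\beta)$ using $h(\overline{C}_m(f)) \leq h(f^{\times m}) = m\, h(f)$ and the fact that $\overline{C}_{n,m}$ is a finite cover of $C_{n,m}$, so entropies agree. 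This gives the estimate with the untwisted integral homology of the configuration space.

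The next step is to pass from $H_*(C_{n,m};\Z)$ to the homology of the covering space $\widetilde{C}_{n,m}$ specialized at $|x|=|d|=1$, which is where $L_{n,m}$ lives. For a fixed unit complex specialization $x = \zeta_1$, $d = \zeta_2$, the local system on $C_{n,m}$ determined by $\alpha$ composed with the character sending $x \mapsto \zeta_1$, $d \mapsto \zeta_2$ has a finite-index lift: since $\zeta_1,\zeta_2$ need not be roots of unity, one must instead observe that the homology $H_m^{lf}(\widetilde{C}_{n,m};\Z) \otimes \C$, viewed at the specialization, computes the twisted homology $H_m(C_{n,m}; \mathcal{L}_{\zeta_1,\zeta_2})$ with a rank-one unitary local system, and that $C_m(f_\beta)_*$ acts on this twisted homology compatibly with $L_{n,m}(\beta)|_{x=\zeta_1,d=\zeta_2}$. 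The entropy bound for actions on twisted (finite-dimensional, over $\C$) homology groups of surfaces with unitary coefficients is again covered by the circle of results around \cite{kob,fr}: either by realizing the unitary local system as a limit of finite cyclic covers, or by appealing directly to the variational principle and the fact that the spectral radius on any subquotient of the total (twisted) homology is bounded by the entropy. Taking the supremum over all $(\zeta_1,\zeta_2)$ on the torus $|x|=|d|=1$ then yields
\[
\sup_{|x|=1,\,|d|=1} \log \rho\bigl(L_{n,m}(\beta)\bigr) \leq m\, h(\beta).
\]

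The step I expect to be the main obstacle is the passage to twisted coefficients with a \emph{generic unitary} (non-torsion) character: the clean statement (\ref{eqn:Shubconj}) is about the action on ordinary homology of a nice space, and one genuinely needs that the spectral-radius lower bound survives restriction to a unitary local subsystem. One robust way around this is to avoid twisted coefficients entirely at finitely many points and instead use a continuity/density argument: the map $(\zeta_1,\zeta_2) \mapsto \log\rho(L_{n,m}(\beta)|_{x=\zeta_1,d=\zeta_2})$ is continuous on the compact torus, and on the dense subset of pairs of roots of unity the specialization $L_{n,m}(\beta)|_{x=\zeta_1,d=\zeta_2}$ is a direct summand of $C_m(f_\beta)_*$ acting on $H_*$ of the corresponding \emph{finite} abelian cover $\widetilde{C}_{n,m}^{\,(\zeta_1,\zeta_2)}$ of $C_{n,m}$ — which is again a surface-like space to which Koberda's inequality applies, and whose entropy is $h(C_m(f_\beta))$ since it is a finite cover. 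Hence the bound $\log\rho \leq m\,h(\beta)$ holds on a dense set, and by continuity it holds everywhere on the torus, giving the supremum bound. The remaining routine points are checking that $\mathcal{H}_{n,m}$ (spanned by multiforks inside $H_m^{lf}$) sits inside the relevant homology as a $B_n$-subrepresentation after specialization — which follows from the Remark on generic identification of $\mathcal{H}_{n,m}$, $H_m$, and $H_m^{lf}$ — and that taking a subrepresentation can only decrease the spectral radius.
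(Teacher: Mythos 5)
Your argument is correct and follows essentially the same route as the paper: the paper likewise bounds the entropy of $C_m(f_\beta)$ by $m\,h(\beta)$, passes to the finite abelian covers $\widetilde{C}_{A,B}$ of $C_{n,m}$ determined by root-of-unity characters of $\alpha$, identifies the specialized $L_{n,m}(\beta)$ with a deck-transformation eigenspace summand of the lifted action on homology (the Band--Boyland argument), and concludes by density of the root-of-unity pairs in $S^{1}\times S^{1}$ together with continuity of the spectral radius. Your initial detour through generic unitary local systems is abandoned in favor of exactly this density-plus-continuity argument, so no substantive difference remains.
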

\begin{proof}
Let $\widetilde{C}$ be a finite covering of the unordered configuration space $C_{n,m} = C_{m}(D_{n})$. If the action of $\beta$ on $C_{n,m}$ lifts, then by (\ref{eqn:ent}), $\log \rho(\widetilde{\beta_{\widetilde{C}}}{}_{*} ) \leq mh(\beta)$
holds, where $\widetilde{\beta_{\widetilde{C}}}: \widetilde{C} \rightarrow \widetilde{C}$ denotes the lift of a homeomorphism $\beta$.

For non-negative integers $A,B$,
Let $\widetilde{C}=\widetilde{C}_{A,B}$ be a finite abelian covering of $C_{n,m}$ that corresponds to the kernel of $\alpha_{A,B}:\pi_{1}(C_{n,m}) \rightarrow \Z\slash A \Z \oplus \Z \slash B\Z$,
where $\alpha_{A,B}$ is given by the compositions
\[ 
\xymatrix{
\pi_{1}(C_{n,m}) \ar[r]^(0.4){\alpha} & \Z\oplus \Z \cong \langle x \rangle \oplus \langle d \rangle \ar[r]& (\langle x \rangle \slash x^{A}) \oplus (\langle d \rangle \slash d^{B})= \Z\slash A \Z \oplus \Z \slash B\Z.
} 
\]
The standard topological argument, using the eigenspace decompositions for the deck translations (See \cite{bb} for the case $\Z$-covering, the case of the reduced Burau representation $L_{n,1}$. The same argument applies to the case $\Z^{2}$-covering) shows that for $a=1,\ldots,A-1$ and $b=1,\ldots, B-1$,
\[ \rho( L_{n,m}(\beta)|_{ x= e^{\frac{2\pi a \sqrt{-1}}{A}}, d=e^{\frac{2\pi b \sqrt{-1}}{B} }})  \leq \rho( \widetilde{\beta}_{\widetilde{C}_{A,B}}). \]
The sets $\{ (e^{ \frac{2 \pi a \sqrt{-1}}{A}},e^{\frac{2 \pi b\sqrt{-1}}{B}}) \in \: | \:  A,a,B,b \in \Z \}$ is dense in $S^{1} \times S^{1}=\{(x,d) \in \C^{2}\: | \: |x|=|d|=1\}$, hence we get desired inequality.
\end{proof}

Since generically one can identify the quantum representation with Lawrence's representation, quantum representations also provide estimates of entropy. 

\begin{theorem}
\label{theorem:qestimate}
Let $\beta$ be an $n$-braid.
\begin{enumerate}
\item $\sup_{|q|=1,|z|=1} \log \rho(\widehat{\varphi_{n,m}^{W}}(\beta)) \leq m  h(\beta)$.
\item $\sup_{|q|=1,|z|=1} \log \rho(\widehat{\varphi_{n,m}^{V}}(\beta)) \leq m h(\beta)$.
\item $\sup_{|q|=1} \log \rho(\varphi_{\alpha}(\beta)) \leq \frac{n\alpha-n}{2}h(\beta)$.
\end{enumerate}
\end{theorem}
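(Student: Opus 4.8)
The plan is to deduce all three inequalities from Theorem~\ref{theorem:estimate} together with the identifications established in Section~2, so the proof is essentially a bookkeeping of how spectral radii transform under the operations we have at our disposal: restriction to a subrepresentation, passage to a quotient field isomorphism, direct summation, tensor products, and specialization of the variables $x,d$ to points on the unit torus $|x|=|d|=1$. The key preliminary observation is that specializing $x=z^{-1}q$ and $d=-q$ sends the unit circle $|q|=1$, $|z|=1$ into the torus $\{|x|=|d|=1\}$ (indeed $|-q|=1$ and $|z^{-1}q|=1$), so any bound valid for all $(x,d)$ on that torus specializes to a bound over $|q|=1$, $|z|=1$.

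For part (1): by Theorem~\ref{theorem:WisH} we have $\widehat{W_{n,m}} \cong_{\mQ} \mathcal{H}_{n,m}|_{x=z^{-1}q,\,d=-q}$, and an isomorphism over the quotient field does not change the characteristic polynomial of $\beta$ acting, hence does not change the spectral radius at a generic specialization; taking sup over the (Zariski-dense, hence dense in the analytic topology on the unit torus) set of points where the isomorphism is defined gives $\sup_{|q|=1,|z|=1}\log\rho(\widehat{\varphi^{W}_{n,m}}(\beta)) \le \sup_{|x|=|d|=1}\log\rho(L_{n,m}(\beta)) \le m\,h(\beta)$, the last step being exactly Theorem~\ref{theorem:estimate}. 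For part (2): by (\ref{eqn:splitVnm}), $\widehat{V_{n,m}}\cong_{\mQ}\bigoplus_{k=0}^{m}\widehat{W_{n,k}}$, and the spectral radius of a block-diagonal operator is the maximum of the spectral radii of the blocks, so $\rho(\widehat{\varphi^{V}_{n,m}}(\beta)) = \max_{0\le k\le m}\rho(\widehat{\varphi^{W}_{n,k}}(\beta))$ generically; since $k\,h(\beta)\le m\,h(\beta)$ for $k\le m$ (using $h(\beta)\ge 0$), part (1) applied to each $k$ gives the bound.

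For part (3): by Proposition~\ref{prop:LandQ}, $\varphi_{\alpha}(\beta)$ equals $\varphi_{\bL}(\beta)|_{\lambda=\alpha-1}$ up to the scalar $e^{\frac14\hbar(\alpha-1)^2 e(\beta)}$, which has modulus $1$ when $|q|=1$, hence does not affect $\rho$; and $V_{\alpha}^{\otimes n}\cong V_{\bL}^{\otimes n}|_{\lambda=\alpha-1} = \bigoplus_{m} V_{n,m}|_{\lambda=\alpha-1}$ by (\ref{eqn:Vnm}), with the sum running over $0\le m\le n(\alpha-1)$ since $V_{n,m}|_{\lambda=\alpha-1}=0$ beyond that range. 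Using Lemma~\ref{lemma:symmetry} to fold the range down to $0\le m\le \tfrac{n(\alpha-1)}{2}$ and identifying each $V_{n,m}|_{\lambda=\alpha-1}$ as a sub-$B_n$-module of $\widehat{V_{n,m}}|_{\lambda=\alpha-1}$ (so its spectral radius is bounded by that of $\widehat{\varphi^V_{n,m}}(\beta)$ at the corresponding specialization, which lies on $|q|=1,|z|=1$ once $z=e^{\hbar(\alpha-1)}$ is on the unit circle), part (2) gives $\log\rho(V_{n,m}\text{-block})\le m\,h(\beta)\le \tfrac{n\alpha-n}{2}h(\beta)$; taking the max over the surviving blocks yields the claim.

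The main obstacle is the passage from ``isomorphic over the quotient field'' to an honest inequality of spectral radii at unit-modulus specializations: one must argue that the set of $(q,z)$ on the unit torus at which the $\cong_{\mQ}$ isomorphisms (and the specialization maps) are genuine isomorphisms is dense, and that $\log\rho$ of $\beta$ acting is an upper semicontinuous function of the parameters (being the log of the largest modulus of a root of a polynomial whose coefficients depend continuously, indeed polynomially, on the parameters), so that a bound on a dense set of good parameters propagates to the supremum. This is the same density-plus-semicontinuity argument already invoked in the proof of Theorem~\ref{theorem:estimate}, so it can be cited there; the rest is the routine algebra of spectral radii under $\oplus$, subrepresentation, and unit-modulus scalar twists sketched above.
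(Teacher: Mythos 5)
Your proof is correct and follows essentially the same route as the paper: (1) from Theorem \ref{theorem:WisH} plus Theorem \ref{theorem:estimate}, (2) from the generic splitting (\ref{eqn:splitVnm}) into the null-vector blocks $\widehat{W_{n,k}}$, and (3) from Proposition \ref{prop:LandQ}, the decomposition into $V_{n,m}$-blocks, Lemma \ref{lemma:symmetry}, and part (2). The only difference is that you spell out the passage from $\cong_{\mQ}$ to unit-torus specializations (which can be settled even more directly: the characteristic polynomials have coefficients in $\bL$ and agree identically, so the spectral radii agree at every specialization) and the harmless unit-modulus scalar in Proposition \ref{prop:LandQ} --- details the paper leaves implicit.
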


\begin{proof}
The assertions (1) and (2) follows from Theorem \ref{theorem:estimate} and Theorem \ref{theorem:WisH}.
To see (3), recall that as a $\bL B_{n}|_{z=q^{\alpha-1}} = \C[q^{\pm 1}] B_{n}$-module, we have
\[ V_{\alpha}^{\otimes n} \cong V_{\bL}|_{z=q^{\alpha-1}} \subset \bigoplus_{m=0}^{n(\alpha-1)} V_{n,m}|_{\lambda = \alpha-1}. \]
Moreover, by Lemma \ref{lemma:symmetry} $V_{n,m}|_{\lambda = \alpha-1} \cong V_{n,n(\alpha-1)-m}|_{\lambda = \alpha-1}$. 
Therefore,
\[ \sup_{|q|=1} \rho (\varphi_{\alpha}(\beta)) \leq 
\max_{1 \leq m \leq \frac{n(\alpha-1)}{2}} \sup_{|q|=1} (\varphi^{V}_{n,m}(\beta)|_{z=q^{\alpha-1}}) \leq 
\max_{1 \leq m \leq \frac{n(\alpha-1)}{2}} \sup_{|q|=1} \rho(\widehat{\varphi^{V}_{n,m}}(\beta)|_{ z=q^{\alpha-1}}).\]
By (1), we conclude
\[ \sup_{|q|=1} \log \rho(\varphi_{\alpha}(\beta)) \leq \frac{n(\alpha-1)}{2}h(\beta). \]
\end{proof}

\subsection{Quantum $\sltwo$ invariants and entropy}

An estimates in Theorem \ref{theorem:qestimate} suggests a new relationship between quantum invariants and entropy of braids.

For $\alpha \in \{2,3\ldots,\}$, let $Q^{\sltwo;V_{\alpha}}_{K}(q)= \tr(q^{\frac{H}{2}}\varphi_{\alpha}(\beta)) = [\alpha]_{q}J_{\alpha,K}(q)$ be the quantum $(\sltwo,V_{\alpha})$-invariant of the knot $K$, another common normalization of the colored Jones polynomials used to define quantum invariants of 3-manifolds.

\begin{theorem}
\label{theorem:JvsE}
Let $K$ be a knot represented as the closure of an $n$-braid $\beta$, and $\alpha \in \{2,3,\ldots\}$. Then
\[ \sup_{|q|=1} \log |Q^{\sltwo;V_{\alpha}}_{K}(q)| \leq n\log \alpha + \log \rho(\varphi_{\alpha}(\beta)) \leq
n\log \alpha + \frac{n(\alpha-1)}{2}h(\beta). \]
\end{theorem}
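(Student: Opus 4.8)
The plan is to bound $|Q^{\sltwo;V_{\alpha}}_{K}(q)|$ directly from its definition as a trace, and then feed in the entropy estimate already established in Theorem~\ref{theorem:qestimate}(3). Recall $Q^{\sltwo;V_{\alpha}}_{K}(q) = \tr(q^{\frac{H}{2}}\varphi_{\alpha}(\beta))$, where the trace is taken over the $\alpha^{n}$-dimensional space $V_{\alpha}^{\otimes n}$. First I would split the operator into the diagonal part $q^{\frac{H}{2}}$ and the braid part $\varphi_{\alpha}(\beta)$. On the weight space decomposition of $V_{\alpha}^{\otimes n}$, the operator $q^{\frac{H}{2}}$ acts as a diagonal matrix whose entries are powers $q^{\frac{n(\alpha-1)-2m}{2}}$ for $m = 0,1,\ldots,n(\alpha-1)$; when $|q|=1$ every such entry has absolute value $1$, so $q^{\frac{H}{2}}$ is a unitary (in fact diagonal) matrix of operator norm $1$ in the standard basis.

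Next I would estimate the trace of a product by the standard inequality $|\tr(AB)| \leq \dim(V) \cdot \|A\|_{\mathrm{op}}\|B\|_{\mathrm{op}}$, or more carefully, bound $|\tr(q^{\frac{H}{2}}\varphi_{\alpha}(\beta))|$ by the sum of the absolute values of the eigenvalues of the product. Since $q^{\frac{H}{2}}$ is unitary, multiplying by it does not change the spectral radius up to the usual submultiplicativity: $\rho(q^{\frac{H}{2}}\varphi_{\alpha}(\beta)) \leq \rho(\varphi_{\alpha}(\beta))$ is not quite automatic, so instead I would use $|\tr(M)| \leq (\dim V_{\alpha}^{\otimes n}) \rho(M)$ together with a Gelfand-type argument, or more simply bound $|\tr(q^{\frac H2}\varphi_\alpha(\beta))|$ by $\alpha^{n}$ times the spectral radius of $\varphi_\alpha(\beta)$ after observing that conjugating $\varphi_\alpha(\beta)$ by the diagonal unitary and taking traces only reshuffles eigenvalues-with-weights. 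The cleanest route: $|\tr(q^{\frac{H}{2}}\varphi_{\alpha}(\beta))| \le \sum |\text{(eigenvalues of } q^{\frac H2}\varphi_\alpha(\beta))| \le \alpha^{n}\,\rho(q^{\frac H2}\varphi_\alpha(\beta))$, and since $q^{H/2}$ has unit-modulus eigenvalues and commutes with the weight-space splitting that $\varphi_\alpha(\beta)$ preserves, $\rho(q^{\frac H2}\varphi_\alpha(\beta)) \le \rho(\varphi_\alpha(\beta))$ holds block by block. Taking logs gives
\[ \sup_{|q|=1}\log|Q^{\sltwo;V_{\alpha}}_{K}(q)| \leq \log(\alpha^{n}) + \log\rho(\varphi_{\alpha}(\beta)) = n\log\alpha + \log\rho(\varphi_{\alpha}(\beta)), \]
which is the first inequality.

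The second inequality is then immediate from Theorem~\ref{theorem:qestimate}(3), which states $\sup_{|q|=1}\log\rho(\varphi_{\alpha}(\beta)) \leq \frac{n\alpha - n}{2}h(\beta) = \frac{n(\alpha-1)}{2}h(\beta)$; one just substitutes and takes the supremum over $|q|=1$ on both sides of the first inequality, using that $n\log\alpha$ is a constant independent of $q$. The main obstacle I anticipate is making the step $\rho(q^{\frac{H}{2}}\varphi_{\alpha}(\beta)) \le \rho(\varphi_{\alpha}(\beta))$ precise: since $q^{H/2}$ is diagonal in the standard basis and the weight spaces $V_{n,m}$ are $\varphi_{\alpha}(\beta)$-invariant, on each weight space $q^{H/2}$ acts as a single scalar of modulus $1$, so the eigenvalues of the product on that block are exactly $q^{(\text{scalar})}$ times those of $\varphi_{\alpha}(\beta)$ — hence equal spectral radii on each block, and $\rho$ of the whole is the max over blocks. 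So in fact $\rho(q^{\frac{H}{2}}\varphi_{\alpha}(\beta)) = \rho(\varphi_{\alpha}(\beta))$ exactly, which tightens the argument; everything else is the elementary trace bound $|\tr M|\le(\dim)\rho(M)$ and a clean substitution.
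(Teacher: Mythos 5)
Your proposal is correct and follows essentially the same route as the paper: bound $|\tr(q^{\frac{H}{2}}\varphi_{\alpha}(\beta))|$ by $\alpha^{n}\rho(q^{\frac{H}{2}}\varphi_{\alpha}(\beta))$, use that $q^{\frac{H}{2}}$ has unit-modulus spectrum on $|q|=1$, and then invoke Theorem \ref{theorem:qestimate}(3). The only cosmetic difference is that you justify $\rho(q^{\frac{H}{2}}\varphi_{\alpha}(\beta))\leq\rho(\varphi_{\alpha}(\beta))$ blockwise on the weight-space decomposition (obtaining equality), whereas the paper uses commutativity of $q^{\frac{H}{2}}$ and $\varphi_{\alpha}(\beta)$ together with submultiplicativity of the spectral radius for commuting operators; both are valid.
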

\begin{proof}
By definition of the spectral radius, 
\[ |Q^{\sltwo;V_{\alpha}}_{K}(q)| = | \tr(q^{\frac{H}{2}}\varphi_{\alpha}(\beta))| \leq \alpha^{n} \rho (q^{\frac{H}{2}}\varphi_{\alpha}(\beta)) \leq \alpha^{n} \rho(q^{\frac{H}{2}}) \rho(\varphi_{\alpha}(\beta)). \]
Here the last inequality follows from the fact that $q^{\frac{H}{2}}$ and $\varphi_{\alpha}(\beta)$ commutes. When $|q|=1$, 
$\rho(q^{\frac{H}{2}}) = 1$ hence by Theorem \ref{theorem:qestimate} (3), we conclude 
\[ \sup_{|q|=1} | \tr(q^{\frac{H}{2}}\varphi_{\alpha}(\beta))| \leq \sup_{|q|=1}\alpha^{n} \rho(\varphi_{\alpha}(\beta)) \leq \alpha^{n}e^{\frac{n(\alpha-1)}{2} h(\beta)}. \]
\end{proof}

By an analogy of the famous volume conjecture \cite{ka,mumu}, 
It is interesting to look at the asymptotic behavior of $|Q^{\sltwo;V_{\alpha}}_{K}(q)|$. 
By Theorem \ref{theorem:JvsE},
we have
\[  \frac{\sup_{|q|=1} \log |Q^{\sltwo;V_{\alpha}}_{K}(q)|}{\alpha} \leq n \frac{\log \alpha}{\alpha} + \frac{\sup_{|q|=1}\log \rho(\varphi_{\alpha}(\beta))}{\alpha} \leq 
n \frac{\log \alpha}{\alpha} + \frac{n(\alpha-1)}{2\alpha} h(\beta)
\]
This shows

\begin{equation}
\label{eqn:conj}
\limsup_{\alpha \to \infty} \frac{\sup_{|q|=1} \log |Q^{\sltwo;V_{\alpha}}_{K}(q)|}{\alpha} \leq  \limsup_{\alpha \to \infty} \frac{\sup_{|q|=1} \log \rho(\varphi_{\alpha}(\beta))}{\alpha} \leq \frac{n}{2} h(\beta).
\end{equation}

It is interesting to ask the convergence of the limits and when the inequalities (\ref{eqn:conj}) yield the equalities.
In particular, the second inequlaity is related to the question when the estimation of entropy from quantum representation is asymptotically sharp.

\renewcommand{\thesection}{\Alph{section}}
\setcounter{section}{0}

\section{Appendix: Multiforks for Lawrence's representation $L_{n,m}$}

In this appendix, we present multiforks in Lawrence's representation $L_{n,m}$ and explicit matrices of $L_{n,m}(\sigma_{i})$. For the basics of geometric treatments of Lawrence's representation, see \cite[Section 2]{i}.

First we review the definition of multiforks and how multifork represent a homology class in $H^{lf}_{m}(\widetilde{C_{m,n}};\Z)$. Let $Y$ be the $Y$-shaped graph with four vertices $c,r,v_1,v_2$ and oriented edges as shown in Figure \ref{fig:multiforks}(1).
A {\em fork} $F$ based on $d \in \partial D_{n}$ is an embedded image of $Y$ into $D^2=\{z \in \C \: | \: |z| \leqslant n+1\}$ such that:
\begin{itemize}
\item All points of $Y\setminus \{r,v_1,v_2\}$ are mapped to the interior of~$D_n$.
\item The vertex $r$ is mapped to $d_{i}$.
\item The other two external vertices $v_{1}$ and $v_{2}$ are mapped to the puncture points.
\end{itemize}


The image of the edge $[r,c]$ and the image of $[v_1,v_2]=[v_{1},c] \cup [c,v_{2}]$ regarded as a single oriented arc, are denoted by $H(F)$ and $T(F)$. We call $H(F)$ and $T(F)$ the \emph{handle} and the \emph{tine} of the fork $F$, respectively.

A {\em multifork} of dimension $m$ is an ordered tuples of $m$ forks $\bF= (F_{1},\ldots,F_{m})$ such that 
\begin{itemize}
\item $F_{i}$ is a fork based on $d_{i}$.
\item $T(F_{i}) \cap T(F_{j}) \cap D_{n} = \emptyset$ $(i\neq j)$.
\item $H(F_{i}) \cap H(F_{j}) = \emptyset$ $(i \neq j)$.
\end{itemize}
Figure \ref{fig:multiforks} (2) illustrates an example of a multifork of dimension $3$. We often use to represent multiforks consisting of $k$ parallel forks by drawing single fork labelled by $k$, as shown in Figure \ref{fig:multiforks} (3).  

\begin{figure}[htbp]
\centerline{\includegraphics[bb=119 618 489 718, width=120mm]{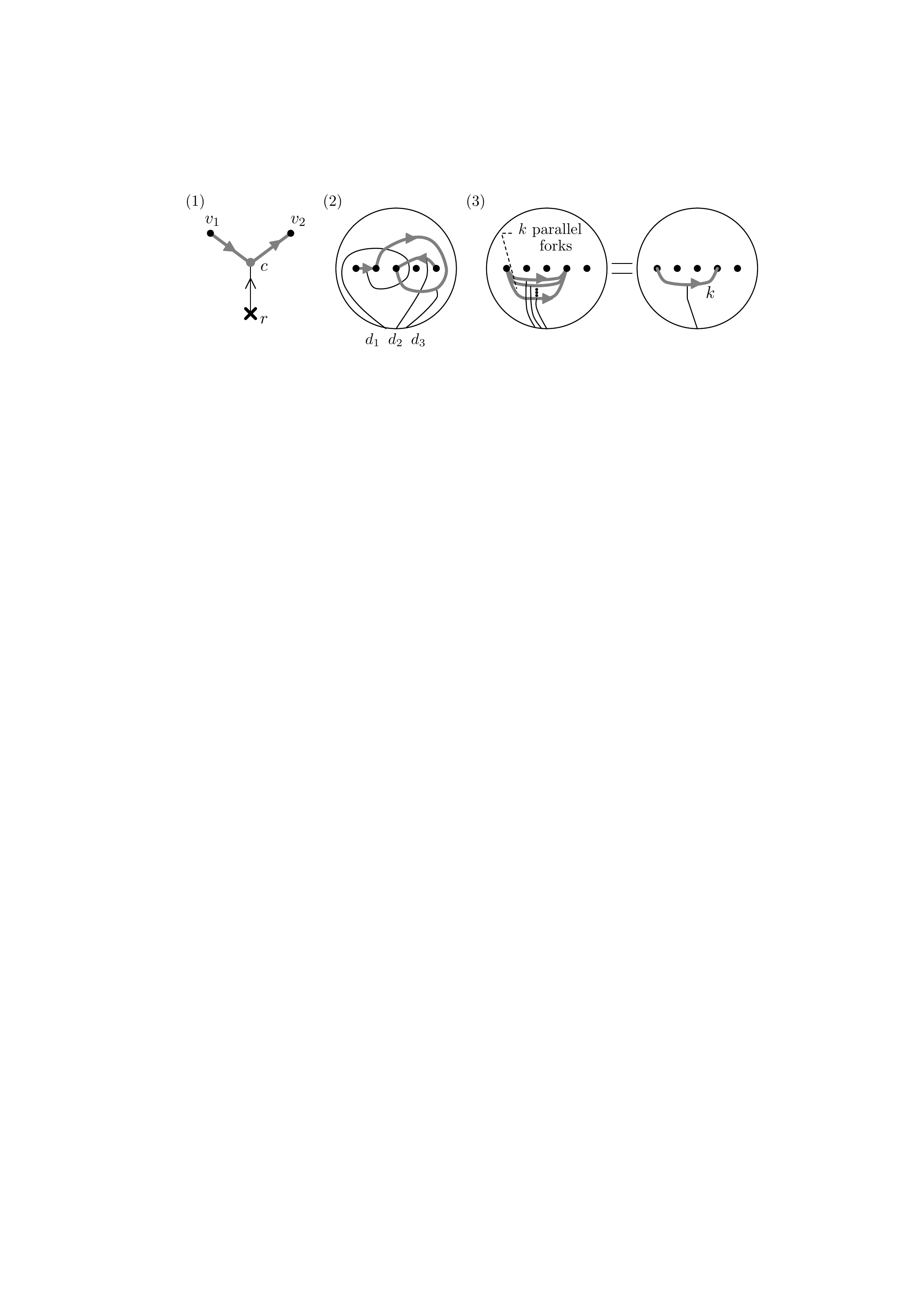}}
\caption{Multiforks: to distinguish tines and handle, we often write tine of forks by a bold gray line. }
 \label{fig:multiforks}
\end{figure}

For a multifork $\bF$, we regard the handle $H(F_{i})$ of the fork $F_{i}$ as a path $\gamma_{i} \co [0,1] \rightarrow D_{n}$ in $D_{n}$ by taking an appropriate parametrization. 
Then the handles of $\bF$ defines a path $H(\bF) = \{\gamma_{1},\ldots,\gamma_{m}\} \co [0,1] \rightarrow C_{n,m}$ in $C_{n,m}$.
Take a lift of $H(\bF)$, $\widetilde{H(\bF)} \co [0,1] \rightarrow \tC_{n,m}$ 
so that $\widetilde{H(\bF)}(0)=\widetilde{\mathbf{d}}$.

Let $\Sigma(\bF) = \left\{ \{z_{1},\ldots,z_{m} \} \in C_{n,m} \: | \: z_{i} \in T(F_{i}) \right\}$, and $\widetilde{\Sigma}(\bF)$ be the $m$-dimensional submanifold of $\tC_{n,m}$ which is the connected component of $\pi^{-1}(\Sigma(\bF))$ containing $\widetilde{H(\bF)}(1)$. The submanifold $\widetilde{\Sigma}(\bF)$ defines an element of $H_{m}^{lf}(\tC_{n,m};\Z)$. By abuse of notation, we will use $\bF$ to represent both multifork and the homology class $[\widetilde{\Sigma}(\bF)] \in H_{m}^{lf}(\tC_{n,m};\Z)$.

Here the orientation of $\widetilde{\Sigma}(\bF)$ is defined so that a canonical homeomorphism $T(F_1) \times \cdots \times T(F_m) \rightarrow \Sigma(\bF)$ is orientation preserving. Thus, for a fork $\bF_{\tau} = (F_{\tau(1)}, \cdots, F_{\tau(m)})$ obtained by permuting its coordinate by a permutation $\tau \in S_m$, we have $\bF_{\tau} = \textsf{sgn}(\tau) \bF \in H_{m}^{lf}(\tC_{n,m};\Z)$.

For $\be = (e_{1},\ldots,e_{n-1}) \in E_{n,m}$, we assign a multifork $\bF_{\be}=\{F_{1},\ldots, F_{m}\}$ in Figure \ref{fig:standmultiforks} and call $\bF_{\bf}$ a {\em standard multifork}. 

\begin{figure}[htbp]
\centerline{\includegraphics[bb=191 614 388 710,width=60mm]{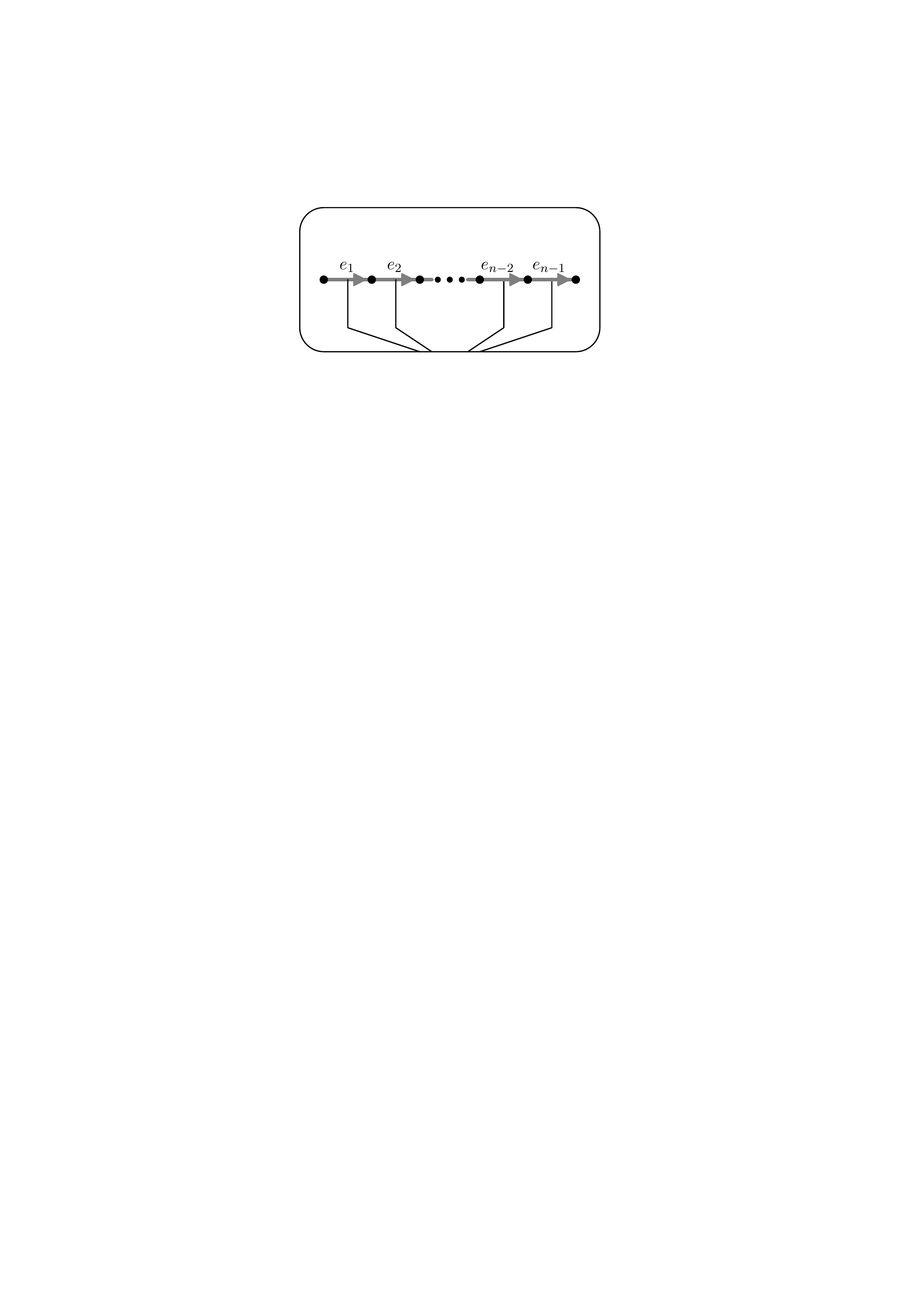}}
\caption{Standard multifork $\bF_{\be}$ for $\be=(e_1,\ldots,e_{n-1})$}
 \label{fig:standmultiforks}
\end{figure}

The set of standard multiforks spans a $\Z[x^{\pm 1},d^{\pm 1}]$-submodule $\mathcal{H}_{n,m}$ of $H_{m}^{lf}(\widetilde{C_{n,m}};\Z)$, which is free of 
dimension $\binom{n+m-2}{2}$ and is invariant under the $B_{n}$-action. This defines a (geometric) Lawrence's representation $L_{n,m} :B_{n} \rightarrow \GL \left(\binom{n+m-2}{2};\Z[x^{\pm 1},d^{\pm 1}]\right).$

From the definition of the submanifold $\widetilde{\Sigma}(\bF)$, 
we graphically express several relations among homology classes represented by multifork, which allows us to express a given multifork as a sum of standard multiforks (see \cite{kra,big} for the case $m=2$). 

\begin{figure}[htbp]
\centerline{\includegraphics[bb=117 495 482 717, width=120mm]{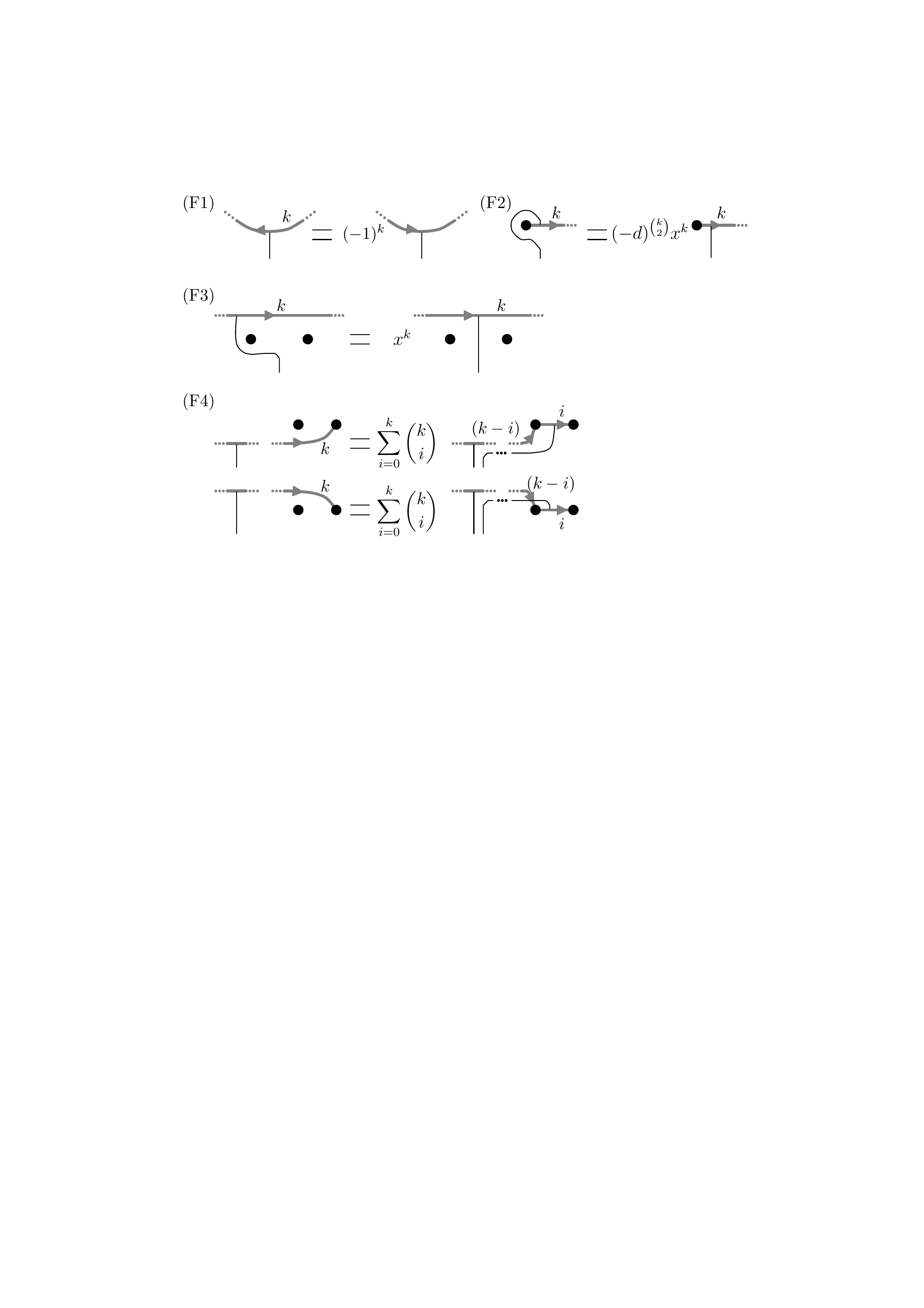}}
\caption{Geometric rewriting formula for multiforks}
 \label{fig:forkrule}
\end{figure}

In particular, these formulae leads to a formula of an explicit matrix representative of $L_{n,m}(\sigma_i)$:

\begin{equation}
\label{eqn:formula}
\begin{aligned}
 L_{n,m}(\sigma_{1})(\bF_{e_1,\ldots,e_{n-1}})& = 
\sum_{l=0}^{e_2} (-1)^{e_1} (-d)^{\binom{e_1}{2}} x^{e_1}\binom{e_2}{l} \bF_{e_1+e_{2}-l,l,\ldots} 
\\
 L_{n,m}(\sigma_{i})(\bF_{e_1,\ldots,e_{n-1}}) &= 
\sum_{k=0}^{e_{i-1}}\sum_{l=0}^{e_{i+1}} (-1)^{e_{i}} (-d)^{\binom{e_{i}+k}{2}} x^{e_{i} + k }\binom{e_{i-1}}{k}\binom{e_{i+1}}{l} \bF_{\ldots, e_{i-1}-k,
 e_{i}+k+e_{i+1}-l,l,\ldots} \\
 & \hspace{4cm}(i=2,\ldots,n-2)
\\
 L_{n,m}(\sigma_{n-1})(\bF_{e_1,\ldots,e_{n-1}}) &= 
\sum_{k=0}^{e_{n-2}} (-1)^{e_{n-1}} (-d)^{\binom{e_{n-1}+k}{2}} x^{e_{n-1} + k }\binom{e_{n-2}}{k} \bF_{\ldots,
 e_{n-2}-k,e_{n-1}+k} \\
\end{aligned}
\end{equation}
As we already mentioned, Proposition \ref{prop:reduced} follows from formula (\ref{eqn:formula}).

However, a multifork expression gives a direct way to see Proposition \ref{prop:reduced}: 
First note that by orientation convention of  $\widetilde{\Sigma}(\bF)$, when $d=-1$, the homology class represented by a multifork $\bF=(F_1,\ldots,F_m)$ is independent of a choice of indices of forks, namely, for a fork $\bF_{\tau} = (F_{\tau(1)}, \ldots, F_{\tau(m)})$ obtained by permuting its coordinate by a permutation $\tau \in S_m$, $\bF_{\tau} = \bF$. Therefore, the correspondence between multifork $\bF=(F_1,\ldots,F_m)$ that represents an element of $\mathcal{H}_{n,m}$ and a family of $m$ forks $\{F_1,\ldots,F_k\}$ that represents an element of $\Sym^{m} \mathcal{H}_{n,1}$ gives rise to the desired isomorphism $\mathcal{H}_{n,m} \rightarrow \Sym^{m} \mathcal{H}_{n,1}$.

\end{document}